\newtheorem{defn}{Definition}[section]
\newtheorem{lem}{Lemma}[section]
\newtheorem{thm}{Theorem}[section]
\newtheorem{rem}{Remark}[section]
\newtheorem{corl}{Corollary}[section]
\journal{arXiv}
\begin{document}

\begin{frontmatter}

\title{Stability analysis for a class of nonlinear  time-changed systems}



\author{Qiong Wu\corref{mycorrespondingauthor}}
\cortext[mycorrespondingauthor]{Corresponding author}
\ead{Qiong.Wu@tufts.edu}

\address{Tufts University, \\Department of Mathematics, 503 Boston Avenue, Medford, MA 02155, USA.}

\begin{abstract}
This paper investigates the stability of a class of differential systems time-changed by $E_{t}$ which is the inverse of a $\beta$-stable subordinator. In order to explore stability, a time-changed Gronwall's inequality and a generalized It\^o formula related to both the natural time $t$ and the time-change $E_{t}$ are developed. For different time-changed systems, corresponding stability behaviors such as exponential sample-path stability, $p$th moment asymptotic stability and $p$th moment exponential stability are investigated. Also a connection between the stability of the  time-changed system and that of its corresponding non-time-changed system is revealed. 
\end{abstract}

\begin{keyword}
  Time-changed Gronwall's inequality; Exponential sample-path stability; pth moment asymptotic stability; pth moment exponential stability.
\end{keyword}

\end{frontmatter}

\linenumbers

\section{Introduction}
Linear and nonlinear systems play an important role in applied areas, for example, control theory, mathematical biology and convex optimization. The stability of linear and nonlinear systems is extensively discussed in \cite{Rugh1996, Feng1992}. Focusing on delay phenomena in the natural sciences, the delayed linear and nonlinear systems are developed and the stability analysis is performed in \cite{erneux2009applied}. Fractional systems can be used to describe complex phenomena in engineering. Various kinds of stabilities of linear and nonlinear fractional dynamic systems are discussed in \cite{matignon1996stability}. More recently,
the following time-changed differential systems are studied in \cite{Kei2011}, 
\begin{eqnarray}\label{generaltimechangedSDE}
\mathrm{d}X(t) = \rho(t, X(t))\mathrm{d}t + \mu(E_{t}, X(t))\mathrm{d}E_{t} + \delta(E_{t}, X(t))\mathrm{d}B_{E_{t}}, ~~~X(0) = x_{0}\in\mathbb{R}^{d}.
\end{eqnarray}
where  $E_{t}$ is a random time-change denoting a new clock. For instance, $E_{t}$ might represents the business time at the calendar time $t$. Specifically, $E_{t}$ is considered as the general inverse of a $\beta$-stable subordinator $U(t)$, defined as
\begin{eqnarray}\label{defofE_{t}}
E_{t} = \inf\{s > 0: U(s) > t\},
\end{eqnarray}
where the stable subordinator $U(t)$ with index $\beta\in(0, 1)$ is a strictly increasing $\beta$-stable L\'evy process and takes Laplace transform
\begin{eqnarray*}
	\mathbb{E}[\exp(-sU(t))] = \exp(-ts^{\beta}).
\end{eqnarray*}
In particular, $E_{t}$ is a continuous time-change since $U(t)$ is strictly increasing. For more details on $\beta$-stable L\'evy processes and their inverses, please see \cite{Janicki1994}. To our best knowledge, there are no results on the stability of any kinds of time-changed differential systems.  In this paper,  the stabilities of various kinds of time-changed differential systems are discussed based on developing a Gronwall's inequality and generalized It\^o formula.  
\section{Preliminaries}\label{Pre}
In this section, several helpful lemmas and definitions are introduced  to illustrate the main stability results to be considered later. Lemma~\ref{finitevaraitionprocessissemimartingale} below indicates that the time-change $E_{t}$ is a semimartingale.
\begin{lem}\label{finitevaraitionprocessissemimartingale}
(\cite{Mircea2002}) If $X_{t}$ is an adapted process with c\`{adl\`{a}g} paths of finite variation on compacts, then $X_{t}$ is a semimartingale.
\end{lem}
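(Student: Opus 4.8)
This is a classical structural fact, so the plan is to reduce it directly to the definition of a semimartingale; the argument one gives depends on which of the two standard (equivalent) definitions is taken as the starting point. If a semimartingale is defined through its canonical-type decomposition --- $X_t = X_0 + M_t + A_t$ with $M$ a c\`adl\`ag local martingale, $M_0=0$, and $A$ adapted, c\`adl\`ag, of finite variation on compacts, $A_0 = 0$ --- then there is essentially nothing to prove: I would simply take $M\equiv 0$ and $A_t := X_t - X_0$. The hypotheses on $X_t$ (adapted, c\`adl\`ag, finite variation on compacts) are precisely what is required for $A$ to be an admissible finite-variation component, and $X_0$ is $\mathcal F_0$-measurable.

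If instead one works with the functional-analytic (``good integrator'') definition, I would argue as follows. First, since the path $t\mapsto X_t(\omega)$ is c\`adl\`ag and of finite variation on each $[0,n]$, write it pathwise as a difference of two nondecreasing c\`adl\`ag functions, so that for each fixed $t$ the Lebesgue--Stieltjes integral $\int_0^t H_s(\omega)\, dX_s(\omega)$ is well-defined; one checks that this coincides with the elementary stochastic integral when $H$ is simple predictable and that it is jointly measurable. Next, for a simple predictable integrand $H = H_0\mathbf 1_{\{0\}} + \sum_i H_i \mathbf 1_{(T_i, T_{i+1}]}$ with $|H|\le 1$, the pathwise estimate $\bigl|\int_0^t H_s\,dX_s\bigr| \le \mathrm{Var}_{[0,t]}(X) =: |X|_t$ holds. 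The process $t\mapsto |X|_t$ is adapted, c\`adl\`ag, nondecreasing and, by hypothesis, finite for every $t$ almost surely; hence given $\varepsilon>0$ I can pick $K$ with $\mathbb P(|X|_t > K) < \varepsilon$, which shows that $\{\int_0^t H\,dX : H \text{ simple predictable}, |H|\le 1\}$ is bounded in probability for each $t$. That is exactly the defining property of a semimartingale (using, if desired, the localizing sequence $T_n = \inf\{t: |X|_t \ge n\}$ to pass from the bound on compacts to the global statement).

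The main obstacle, such as it is, is purely bookkeeping rather than conceptual: one must verify carefully that the pathwise Lebesgue--Stieltjes integral against the (random) c\`adl\`ag finite-variation path agrees with the elementary stochastic integral on simple predictable integrands, and that the total-variation process $|X|_t$ inherits adaptedness and c\`adl\`ag paths from $X$. No martingale theory is needed at all. Since the cited reference \cite{Mircea2002} already records this statement, in the paper I would present only the short decomposition argument ($M\equiv 0$, $A = X - X_0$), and note that under the good-integrator definition the boundedness-in-probability estimate above yields the same conclusion.
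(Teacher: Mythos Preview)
Your argument is correct, but there is nothing in the paper to compare it against: the lemma is stated with a bare citation to \cite{Mircea2002} and no proof is given. Your short decomposition argument ($M\equiv 0$, $A=X-X_0$) is the standard one and would be entirely appropriate here; the alternative good-integrator verification you sketch is also fine but more than is needed.
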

Let $B_{t}$ be a standard Brownian motion and $E_{t}$ be the time-change. Consider the following filtration $\mathcal{F}_{t}$ generated by $B_{t}$ and $E_{t}$
\begin{eqnarray}\label{filtrationgeneratedbyBandE}
	\mathcal{F}_{t} = \bigcap_{u>t}\big\{\sigma(B_{s}: 0\leq s\leq u)\vee\sigma(E_{s}: s\geq 0)\big\},
\end{eqnarray}
where $\sigma_{1}\vee\sigma_{2}$ denotes the $\sigma$-field generated by the union $\sigma_{1}\cup\sigma_{2}$ of $\sigma$-fields $\sigma_{1}, \sigma_{2}$.
\begin{lem}\label{timechangedBrown}
	(\cite{Marcin2010})The time-changed Brownian motion, $B_{E_{t}}$, is a square integrable martingale with respect to the filtration $\{\mathcal{F}_{E_{t}}\}_{t\geq 0}$, where $\{\mathcal{F}_{t}\}$ is the filtration given in Eq.~\eqref{filtrationgeneratedbyBandE}. The quadratic variation of the time-changed Brownian motion satisfies $\langle B_{E_{t}}, B_{E_{t}}\rangle = E_{t}$.
\end{lem}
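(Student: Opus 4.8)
\emph{Sketch of proof.} The plan is to view $B_{E_t}$ as the time change of the Brownian motion $B$ along the increasing family of random times $\{E_t\}_{t\ge 0}$, and to use that $U$ --- hence the entire path of $E$ --- is independent of $B$ and already $\mathcal F_0$-measurable. Under this structure, conditioning on $E$ reduces every assertion to a statement about an ordinary Brownian motion sampled at deterministic times, after which one integrates back. First I would check that the filtration cooperates: since $U$ is strictly increasing and right-continuous, $\{E_t\le u\}=\{U(u)\ge t\}\in\sigma(E_s:s\ge 0)\subseteq\mathcal F_u$, so each $E_t$ is an $\{\mathcal F_t\}$-stopping time; because $\sigma(E_s:s\ge 0)$ lies in every $\mathcal F_u$, the continuous non-decreasing map $t\mapsto E_t$ is $\mathcal F_0$-measurable, the time-changed filtration $\mathcal G_t:=\mathcal F_{E_t}$ again satisfies the usual conditions, and $B_{E_t}$ is $\mathcal G_t$-measurable.

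For integrability, conditioning on the independent path of $E$ and using $\mathbb E[B_s^2]=s$ gives $\mathbb E[B_{E_t}^2]=\mathbb E[E_t]$, and since the inverse $\beta$-stable subordinator has finite moments of every order one has $\mathbb E[E_t]<\infty$, so $B_{E_t}\in L^2$ for each $t$. For the martingale property, fix $0\le s<t$: working conditionally on $\sigma(E)\subseteq\mathcal F_{E_s}$, the process $B$ is still a Brownian motion, the times $E_s\le E_t$ become deterministic, and $\mathcal F_{E_s}$ reduces to the Brownian history up to the deterministic time $E_s$ enlarged by the (now trivial) $\sigma(E)$; the identity $\mathbb E[B_{E_t}\mid\mathcal F_{E_s}]=B_{E_s}$ is then just the elementary $\mathbb E[B_b\mid\sigma(B_r:r\le a)]=B_a$ for $a\le b$ deterministic. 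Together with the $L^2$-bound this proves $B_{E_t}$ is a square-integrable $\{\mathcal G_t\}$-martingale. (Alternatively one avoids conditioning: since $B$ remains an $\{\mathcal F_t\}$-martingale when the Brownian filtration is enlarged by the independent $\sigma(E)$, apply Doob's optional sampling to the bounded truncated stopping times $E_s\wedge n\le E_t\wedge n$ and pass to the limit using $\mathbb E[B_{E_t\wedge n}^2]=\mathbb E[E_t\wedge n]\le\mathbb E[E_t]<\infty$ together with continuity of $B$.)

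For the quadratic variation, the same conditioning argument applied to the martingale $B_t^2-t$ shows $B_{E_t}^2-E_t$ is a $\{\mathcal G_t\}$-martingale (its integrability following from $\mathbb E[E_t^2]<\infty$). Since $t\mapsto E_t$ is continuous, non-decreasing, vanishes at $0$ and is $\mathcal G_t$-adapted --- hence a predictable process of finite variation --- uniqueness in the Doob--Meyer decomposition forces $\langle B_{E_\cdot},B_{E_\cdot}\rangle_t=E_t$; equivalently, because $B_{E_t}$ is continuous its quadratic variation is the a.s. limit of the sums $\sum_i\big(B_{E_{t_{i+1}}}-B_{E_{t_i}}\big)^2$, which conditionally on $E$ is precisely the quadratic variation of Brownian motion on $[0,E_t]$, namely $E_t$.

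The one step requiring genuine care is the interaction between the time change and the filtration in the martingale step: the $E_t$ are only a.s. finite, not bounded, so either the truncation limit must be justified --- this is where finiteness of the moments of $E_t$ is essential --- or one must exploit that the whole path of $E$ is $\mathcal F_0$-measurable and independent of $B$, which collapses the time change to a deterministic one conditionally on $E$. The remaining verifications are routine measure-theoretic bookkeeping.
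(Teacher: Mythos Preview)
The paper does not supply its own proof of this lemma: it is quoted as a preliminary result from \cite{Marcin2010} and left unproved. So there is no ``paper's proof'' to compare against; your sketch stands on its own.

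On the merits, your argument is the standard one and is essentially correct. The key structural observation --- that the entire path of $E$ sits inside $\mathcal{F}_0$ by the very definition of the filtration in Eq.~\eqref{filtrationgeneratedbyBandE}, and that $B$ remains an $\{\mathcal{F}_t\}$-Brownian motion after this independent enlargement --- is exactly what makes the conditioning-on-$E$ reduction work, and you have identified it clearly. The alternative route via optional sampling with truncation $E_t\wedge n$ and uniform integrability from $\mathbb{E}[E_t]<\infty$ is also sound. Two small points worth tightening in a full write-up: first, the independence of $B$ and $U$ (hence of $B$ and $E$) is being used throughout but is an \emph{assumption} of the model rather than something derived --- the paper leaves it implicit in the product-type filtration, so you should state it explicitly; second, your identity $\{E_t\le u\}=\{U(u)\ge t\}$ is correct for a strictly increasing right-continuous $U$, but the event on the right is literally in $\sigma(U(s):s\ge 0)$, and you are silently using that this $\sigma$-field coincides with $\sigma(E_s:s\ge 0)$, which is true since each determines the other. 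With those caveats made explicit, the sketch would pass as a complete proof.
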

From Lemmas~\ref{finitevaraitionprocessissemimartingale} and~\ref{timechangedBrown}, it is well known that integrals with respect to the time-change, $E_{t}$, and the time-changed Brownian motion, $B_{E_{t}}$, are well-defined. Moreover, the following two lemmas provide connections among different kinds of time-changed integrals.

\begin{lem}\label{1stchangeofvariable}
(1st Change-of-Variable Formula\cite{Kei2011}, \cite{Jean1979}) Let $E_{t}$ be the $(\mathcal{F}_{t})$-measurable time-change. Suppose $\mu(t)$ and $\delta(t)$ are $(\mathcal{F}_{t})$-measurable and integrable. Then, for all $t\geq 0$ with probability one,
\begin{eqnarray*}
	\int_{0}^{E_{t}}\mu(s)\mathrm{d}s + \int_{0}^{E_{t}}\delta(s)\mathrm{d}B_{s} = \int_{0}^{t}\mu(E_{s})\mathrm{d}E_{s} + \int_{0}^{t}\delta(E_{s})\mathrm{d}B_{E_{s}}.
\end{eqnarray*}
\end{lem}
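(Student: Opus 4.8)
The plan is to verify the identity term by term, splitting it into a finite-variation (drift) part carrying $\mu$ and a martingale part carrying $\delta$, and proving each separately; the geometric fact that glues the two reductions is that $E_{t}$ is the continuous, nondecreasing right-inverse of the strictly increasing subordinator $U$, so that $E_{U(a)}=a$ for every $a\ge 0$ and $\{u\ge 0: E_{u}\le a\}=[0,U(a))$ (the half-open endpoint being irrelevant below since $\mathrm{d}E$ has no atoms). I also record that, because of the way the filtration in Eq.~\eqref{filtrationgeneratedbyBandE} is built, each $E_{t}$ is $\mathcal{F}_{0}$-measurable, hence a (trivial) $(\mathcal{F}_{t})$-stopping time, while $B$ remains an $(\mathcal{F}_{t})$-martingale and, by Lemma~\ref{finitevaraitionprocessissemimartingale}, $E$ is a semimartingale; these are exactly the features that make the time-change manipulations below legitimate.

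First I would dispose of the drift term by a pathwise, measure-theoretic change of variables. Fix $\omega$ outside a null set and $t\ge0$, and let $\nu$ be the push-forward of the Lebesgue--Stieltjes measure $\mathrm{d}E_{u}$ on $[0,t]$ under the continuous map $u\mapsto E_{u}$. Using $\{u: E_{u}\le a\}=[0,U(a))$ and the continuity of $E$ one gets $\nu([0,a])=\int_{0}^{t}\mathbf{1}_{[0,U(a))}(u)\,\mathrm{d}E_{u}=E_{t\wedge U(a)}=E_{t}\wedge a$, so $\nu$ is simply Lebesgue measure restricted to $[0,E_{t}]$. Consequently $\int_{0}^{E_{t}}g(s)\,\mathrm{d}s=\int g\,\mathrm{d}\nu=\int_{0}^{t}g(E_{u})\,\mathrm{d}E_{u}$ for every integrable $g$; taking $g=\mu$ gives the drift identity, and --- crucially --- taking $g=\delta^{2}$ gives $\int_{0}^{E_{t}}\delta(s)^{2}\,\mathrm{d}s=\int_{0}^{t}\delta(E_{s})^{2}\,\mathrm{d}E_{s}$, i.e.\ the two It\^o isometries (recall $\langle B_{E_{\cdot}}\rangle_{t}=E_{t}$ from Lemma~\ref{timechangedBrown}) agree. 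The flat stretches of $E$ are harmless here since $\mathrm{d}E$ assigns them zero mass.

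For the martingale term I would use --- and, if a self-contained treatment is wanted, reprove --- the time-change theorem for stochastic integrals in the spirit of \cite{Kei2011,Jean1979}. First check the identity for an elementary predictable integrand $\delta(s)=\xi\,\mathbf{1}_{(a,b]}(s)$ with $\xi$ bounded and $\mathcal{F}_{a}$-measurable: the left side equals $\xi\bigl(B_{E_{t}\wedge b}-B_{E_{t}\wedge a}\bigr)$, while for the right side $\delta(E_{s})=\xi\,\mathbf{1}_{\{a<E_{s}\le b\}}$, and since $\{a<E_{s}\le b\}=[U(a),U(b))$ with $U(a),U(b)$ being $\mathcal{F}_{0}$-measurable, the continuity of $s\mapsto B_{E_{s}}$ gives $\int_{0}^{t}\mathbf{1}_{\{a<E_{s}\le b\}}\,\mathrm{d}B_{E_{s}}=B_{E_{t\wedge U(b)}}-B_{E_{t\wedge U(a)}}=B_{E_{t}\wedge b}-B_{E_{t}\wedge a}$, so the two sides coincide. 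By linearity the identity holds for all elementary integrands; an $L^{2}$-approximation, consistent on both sides precisely because the isometries matched in the previous paragraph (so $\int_{0}^{t}(\delta_{n}-\delta)^{2}(E_{s})\,\mathrm{d}E_{s}=\int_{0}^{E_{t}}(\delta_{n}-\delta)^{2}(s)\,\mathrm{d}s$), extends it to every $\delta$ with $\int_{0}^{E_{t}}\delta(s)^{2}\,\mathrm{d}s<\infty$ a.s., after a routine localization reducing first to bounded $\delta$ and bounded $E_{t}$. Adding the drift and martingale identities yields the claim.

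The step I expect to require the most care is the filtration bookkeeping in the martingale part: the left-hand integrals run on the natural clock $(\mathcal{F}_{t})$ and are driven by $B$, whereas the right-hand ones run on the slowed clock $(\mathcal{F}_{E_{t}})$ and are driven by $B_{E_{t}}$, so one must justify that ``evaluating the It\^o integral $\int_{0}^{\cdot}\delta\,\mathrm{d}B$ at the stopping time $E_{t}$'' really coincides with ``integrating the composed process $\delta(E_{\cdot})$ against the martingale $B_{E_{\cdot}}$''. This is the optional-sampling/time-change property of stochastic integrals, and it is exactly where the facts that $E_{t}$ is an $(\mathcal{F}_{t})$-stopping time and that $\delta$ is $(\mathcal{F}_{t})$-adapted enter essentially; a smaller but still necessary point is checking progressive measurability of $s\mapsto\delta(E_{s})$ with respect to $(\mathcal{F}_{E_{s}})$ and that the elementary processes used in the approximation can be chosen on the correct filtered space. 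Everything else --- the deterministic change of variables, the elementary-integrand computation, the isometry bookkeeping --- is then routine.
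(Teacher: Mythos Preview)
The paper does not prove this lemma at all; it is quoted verbatim from \cite{Kei2011,Jean1979} as a preliminary result, with no argument supplied. There is therefore nothing in the paper to compare your proposal against.

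That said, your proposal is a correct self-contained proof along the standard lines one finds in \cite{Jean1979}: splitting into a pathwise Lebesgue--Stieltjes change of variables for the drift and an elementary-integrand computation followed by an $L^{2}$ (It\^o-isometry) limit for the martingale part. The key structural observation you make---that every $E_{t}$ is $\mathcal{F}_{0}$-measurable under the filtration \eqref{filtrationgeneratedbyBandE}, hence a bona fide stopping time---is exactly what makes the optional-sampling step legitimate. One cosmetic point: since $E_{U(a)}=a$, the set $\{u:E_{u}\le a\}$ is actually the closed interval $[0,U(a)]$, not $[0,U(a))$; but as you already note, the endpoint carries no $\mathrm{d}E$-mass and $B_{E_{\cdot}}$ is continuous, so this does not affect any of the computations.
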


\begin{lem}\label{2ndchangeofvariable}
	(2nd Change-of-Variable Formula\cite{Kei2011}) Let $E_{t}$ be the $(\mathcal{F}_{t})$-measurable time-change which is the general inverse $\beta$-stable subordinator $U(t)$. Suppose $\mu(t)$ and $\delta(t)$ are $(\mathcal{F}_{t})$-measurable and integrable. Then, for all $t\geq 0$ with probability one,
	\begin{eqnarray*}
		\int_{0}^{t}\mu(s)\mathrm{d}E_{s} + \int_{0}^{t}\delta(s)\mathrm{d}B_{E_{s}} = \int_{0}^{E_{t}}\mu(U(s-))\mathrm{d}s + \int_{0}^{E_{t}}\delta(U(s-))\mathrm{d}B_{s}.
	\end{eqnarray*}
\end{lem}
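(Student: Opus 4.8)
The plan is to deduce the 2nd change-of-variable formula from the 1st one (Lemma~\ref{1stchangeofvariable}), applied to the ``pulled-back'' integrands $s\mapsto\mu(U(s-))$ and $s\mapsto\delta(U(s-))$, and then to simplify using the inverse relationship between $U$ and $E$. First I would record the measurability prerequisites: since the filtration in Eq.~\eqref{filtrationgeneratedbyBandE} satisfies $\mathcal{F}_{0}\supseteq\sigma(E_{s}:s\ge 0)$ and the paths of $U$ and $E$ determine one another, each random variable $U(s-)$ is $\mathcal{F}_{0}$-measurable; hence the composed processes $\mu(U(\cdot\,-))$ and $\delta(U(\cdot\,-))$ are again $(\mathcal{F}_{t})$-measurable, and they are integrable because (by a direct estimate, or by the very change of variables being proved) their integrals over $[0,E_{t}]$ are controlled by the integrals of $\mu,\delta$ over $[0,t]$ against $\mathrm{d}E_{s}$. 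Thus Lemma~\ref{1stchangeofvariable} applies to them.

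Applying Lemma~\ref{1stchangeofvariable} with $\mu,\delta$ replaced by $\mu(U(\cdot\,-)),\delta(U(\cdot\,-))$ gives, for all $t\ge 0$ with probability one,
\begin{align*}
&\int_{0}^{E_{t}}\mu(U(s-))\,\mathrm{d}s+\int_{0}^{E_{t}}\delta(U(s-))\,\mathrm{d}B_{s}\\
&\qquad=\int_{0}^{t}\mu(U(E_{s}-))\,\mathrm{d}E_{s}+\int_{0}^{t}\delta(U(E_{s}-))\,\mathrm{d}B_{E_{s}}.
\end{align*}
It therefore suffices to show that inside the two integrals on the right one may replace $U(E_{s}-)$ by $s$.

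For this I would use the elementary structural fact that $U(E_{s}-)=s$ for every $s$ outside the set $N:=\bigcup_{r}(U(r-),U(r)]$, where $r$ ranges over the at most countably many jump times of the c\`adl\`ag subordinator $U$. Indeed, from $E_{t}=\inf\{u>0:U(u)>t\}$ one gets $U(E_{s}-)\le s\le U(E_{s})$ for every $s$, while for $s\notin N$ the value $E_{s}$ is not a jump time of $U$ with $s$ strictly above its left limit, which forces the left inequality to be an equality. Since $E$ is constant on each interval $[U(r-),U(r)]$ (with common value $E_{U(r-)}=E_{U(r)}=r$), the Lebesgue--Stieltjes measure $\mathrm{d}E_{s}$ assigns no mass to $N$; and because $\langle B_{E_{t}},B_{E_{t}}\rangle=E_{t}$ by Lemma~\ref{timechangedBrown}, the set $N$ is also null for $\mathrm{d}\langle B_{E}\rangle_{s}$. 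Consequently $\mu(U(E_{s}-))=\mu(s)$ holds $\mathrm{d}E_{s}$-a.e.\ and $\delta(U(E_{s}-))=\delta(s)$ holds $\mathrm{d}\langle B_{E}\rangle_{s}$-a.e., so the corresponding Lebesgue--Stieltjes integrals coincide, and, since two predictable integrands agreeing off a $\mathrm{d}\langle B_{E}\rangle$-null set give indistinguishable stochastic integrals, the corresponding stochastic integrals coincide as well. Substituting into the displayed identity yields the claim.

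The only genuinely delicate point is the last one: identifying the flat intervals of $E$ with the jumps of $U$ and checking that $\mathrm{d}E$ does not charge them, so that $U(E_{s}-)$ may be replaced by $s$ $\mathrm{d}E_{s}$-almost everywhere; at a more routine level one must also verify that the pulled-back integrands meet the hypotheses of Lemma~\ref{1stchangeofvariable} and that altering a stochastic integrand on a $\mathrm{d}\langle B_{E}\rangle$-null set leaves the integral unchanged. Everything else is a direct substitution, so I expect the argument to be short.
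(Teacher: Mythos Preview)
The paper does not supply its own proof of this lemma: it is stated as a preliminary result quoted from \cite{Kei2011}, with no argument given. So there is no in-paper proof to compare your proposal against.

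That said, your strategy is the natural one and is essentially correct. Pulling the integrands back through $s\mapsto U(s-)$, invoking Lemma~\ref{1stchangeofvariable}, and then replacing $U(E_{s}-)$ by $s$ inside the right-hand integrals is exactly how one expects to pass from the first change-of-variable formula to the second. Your identification of the exceptional set $N=\bigcup_{r}(U(r-),U(r)]$ and the observation that $E$ is constant on each such interval (hence $\mathrm{d}E_{s}(N)=0$, and likewise for $\mathrm{d}\langle B_{E}\rangle_{s}=\mathrm{d}E_{s}$) is the right mechanism; note that for a $\beta$-stable subordinator with $\beta\in(0,1)$ the range of $U$ has Lebesgue measure zero, so $N$ has \emph{full} Lebesgue measure, and it is precisely the fact that $\mathrm{d}E_{s}$ is singular and supported on the closed range of $U$ that makes the replacement legitimate.

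Two small points to tighten. First, the integrability check for $\mu(U(\cdot-))$ and $\delta(U(\cdot-))$ should not appeal to ``the very change of variables being proved''; establish it directly via the deterministic Lebesgue--Stieltjes substitution $\int_{0}^{E_{t}}|\mu(U(s-))|\,\mathrm{d}s=\int_{0}^{t}|\mu(r)|\,\mathrm{d}E_{r}$ (which is just the pathwise change of variable for increasing functions) before invoking Lemma~\ref{1stchangeofvariable}. Second, the adaptedness of $\mu(U(\cdot-))$ to $(\mathcal{F}_{t})$ needs slightly more than ``$U(s-)$ is $\mathcal{F}_{0}$-measurable'': since $\mu$ is a \emph{process}, you are evaluating it at a random time, so you should say explicitly that $\mu$ is (say) progressively measurable and that $U(s-)$ is an $\mathcal{F}_{0}$-measurable time bounded by a deterministic quantity on the relevant range, which makes $\mu(U(s-))$ measurable with respect to $\mathcal{F}_{U(s-)}\subset\mathcal{F}_{s}$ only if $U(s-)\le s$---which is generally false here. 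The clean way around this is to work instead with the filtration $(\mathcal{G}_{t})=(\mathcal{F}_{E_{t}})$ on the $E$-side and note that after the time change everything becomes $(\mathcal{G}_{t})$-adapted; this is the framework the paper uses elsewhere (e.g.\ in Lemma~\ref{timechangeditoformula}).
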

The next lemma reveals a deep connection between the time-changed SDE~\eqref{timechangedSDE} and its corresponding classical non-time-changed SDE~\eqref{classicSDE}.
\begin{align}
&\mathrm{d}X(t) = \mu(E_{t}, X(t))\mathrm{d}E_{t} + \delta(E_{t}, X(t))\mathrm{d}B_{E_{t}}, ~~~X(0) = x_{0};\label{timechangedSDE}\\
&\mathrm{d}Y(t) = \mu(t, Y(t))\mathrm{d}t + \delta(t, Y(t))\mathrm{d}B_{t}, ~~~Y(0) = x_{0};\label{classicSDE}
\end{align}
\begin{lem}\label{duality}
(\cite{Kei2011} Duality) Let $E_{t}$ be the inverse of a $\beta$-stable subordinator $U(t)$.
\begin{enumerate}
	\item If a process $Y(t)$ satisfies the SDE~\eqref{classicSDE}, then the process $X(t):= Y(E_{t})$ satisfies the SDE~\eqref{timechangedSDE}.
	\item If a process $X(t)$ satisfies the SDE~\eqref{timechangedSDE}, then the process $Y(t) := X(U(t-))$ satisfies the SDE~\eqref{classicSDE}.
\end{enumerate}
\end{lem}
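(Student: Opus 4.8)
The plan is to read off both implications from the two change-of-variable formulas, Lemma~\ref{1stchangeofvariable} and Lemma~\ref{2ndchangeofvariable}, together with the elementary composition identity $E_{U(t-)} = t$, which holds for all $t \ge 0$ almost surely: since $U$ is strictly increasing one has $U(s) > U(t-)$ for every $s > t$ while $U(s) \le U(t-)$ for $s < t$, so by the definition of $E_t$ the infimum $\inf\{s > 0 : U(s) > U(t-)\}$ equals $t$. Throughout I would first pass to the continuous modifications of the solution processes so that the integrated forms of~\eqref{classicSDE} and~\eqref{timechangedSDE} hold simultaneously for all $t \ge 0$ off a single null set; this is what legitimizes substituting random arguments such as $E_t$ and $U(t-)$ into them.

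For part (1), start from $Y(t) = x_0 + \int_0^t \mu(s,Y(s))\,\mathrm{d}s + \int_0^t \delta(s,Y(s))\,\mathrm{d}B_s$. Because $Y$ is continuous and adapted and the coefficients are regular, the integrands $s \mapsto \mu(s,Y(s))$ and $s \mapsto \delta(s,Y(s))$ are $(\mathcal F_t)$-measurable and integrable on compacts, so evaluating the equation at the almost surely finite time $E_t$ and invoking Lemma~\ref{1stchangeofvariable} with these integrands rewrites $\int_0^{E_t}\mu(s,Y(s))\,\mathrm{d}s + \int_0^{E_t}\delta(s,Y(s))\,\mathrm{d}B_s$ as $\int_0^t \mu(E_s,Y(E_s))\,\mathrm{d}E_s + \int_0^t \delta(E_s,Y(E_s))\,\mathrm{d}B_{E_s}$. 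With $X(s) := Y(E_s)$ this is exactly~\eqref{timechangedSDE}, and $X(0) = Y(E_0) = Y(0) = x_0$.

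For part (2), run the argument in the opposite direction. Starting from $X(t) = x_0 + \int_0^t \mu(E_s,X(s))\,\mathrm{d}E_s + \int_0^t \delta(E_s,X(s))\,\mathrm{d}B_{E_s}$ and applying Lemma~\ref{2ndchangeofvariable} to the integrands $s \mapsto \mu(E_s,X(s))$ and $s \mapsto \delta(E_s,X(s))$ gives $X(t) = x_0 + \int_0^{E_t}\mu(E_{U(s-)},X(U(s-)))\,\mathrm{d}s + \int_0^{E_t}\delta(E_{U(s-)},X(U(s-)))\,\mathrm{d}B_s$. Using $E_{U(s-)} = s$ and setting $Y(s) := X(U(s-))$ turns this into $X(t) = x_0 + \int_0^{E_t}\mu(s,Y(s))\,\mathrm{d}s + \int_0^{E_t}\delta(s,Y(s))\,\mathrm{d}B_s$. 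Finally, replace $t$ by $U(t-)$: since $E_{U(t-)} = t$ the upper limits collapse to $t$ while the left-hand side becomes $X(U(t-)) = Y(t)$, yielding precisely~\eqref{classicSDE}.

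The step I expect to require the most care is the bookkeeping behind composing identities of stochastic integral equations with the random time changes $E_t$ and $U(t-)$: one must fix c\`adl\`ag (here continuous) modifications so that the two sides of each equation are indistinguishable as processes before a random argument is inserted, and one must check that the composed processes $X(\cdot) = Y(E_{\cdot})$ and $Y(\cdot) = X(U(\cdot-))$ are adapted to the relevant filtrations (respectively $\{\mathcal F_{E_t}\}$ and $\{\mathcal F_t\}$) and that the measurability and integrability hypotheses of Lemmas~\ref{timechangedBrown}--\ref{2ndchangeofvariable} are satisfied, so that the integrals appearing after the change of variable are genuinely well defined. Granting this, verifying $E_{U(t-)} = t$ and carrying out the two applications of the change-of-variable formulas is routine.
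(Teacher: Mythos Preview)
The paper does not supply its own proof of this lemma: it is quoted verbatim as a result from \cite{Kei2011} and stated without argument. So there is no ``paper's proof'' to compare against.

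That said, your derivation is a sound way to recover the statement from the surrounding machinery. Both directions are exactly the intended applications of Lemmas~\ref{1stchangeofvariable} and~\ref{2ndchangeofvariable}, and the key identity $E_{U(t-)} = t$ that you isolate is precisely the one the paper itself records in Eq.~\eqref{relationofinverseandsubordinator}. Your closing paragraph correctly flags the only genuinely delicate point---that one must pass to indistinguishable versions before plugging in the random times $E_t$ and $U(t-)$, and check adaptedness of the composed processes so that the stochastic integrals on the right-hand sides are well defined. Modulo those measurability verifications (which are handled in \cite{Kei2011}), your argument is complete.
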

Without loss of generality, let $X(t) := X(t;x_{0})$ be the solution of the time-changed SDE~\eqref{generaltimechangedSDE} with initial value $x_{0}$. Assume that $\rho(t, 0) = \mu(E_{t}, 0) = \delta(E_{t}, 0) = 0$ for all $t\geq 0$. So SDE~\eqref{generaltimechangedSDE} admits a trivial solution $X(t) \equiv 0$ corresponding to the initial value $x_{0} = 0$. This solution is also called the equilibrium position.
\begin{defn} The trivial solution of SDE~\eqref{generaltimechangedSDE} is said to be
\begin{itemize}
	\item[(1)]	 exponentially sample-path stable if there is a function $\nu(t): [0, \infty)\to[0, \infty)$ approaching $\infty$ as $t\to\infty$ and a pair of positive constants $\lambda$ and $K$ such that for every sample path
	\begin{eqnarray*}
		\|X(t)\rVert \leq K\| x_{0}\rVert\exp(-\lambda \nu(t)),
	\end{eqnarray*}
	where $t\geq 0$ and $x_{0}\in \mathbb{R}^{d}$ is arbitrary;
	\item[(2)] $p$th moment asymptotically stable if there is a function $\nu(t): [0, +\infty)\to[0, \infty)$ decaying to $0$ as $t\to\infty$ and a positive constant $K$ such that 
	\begin{eqnarray*}
		\mathbb{E}\lVert X_{t}( x_{0})\rVert^p \leq K\lVert x_{0}\rVert^p \nu(t)
	\end{eqnarray*}
	for all $t\geq 0$ and $x_{0}\in \mathbb{R}^{d}$;
	\item[(3)] $p$th moment exponentially stable if there is a pair of positive constants $\lambda$ and $K$ such that 
	\begin{eqnarray*}
		\mathbb{E}\lVert X_{t}( x_{0})\rVert^p \leq K\lVert x_{0}\rVert^p\exp(-\lambda t)
	\end{eqnarray*}
	for all $t\geq 0$ and $x_{0}\in \mathbb{R}^{d}$.
\end{itemize}
\end{defn}
\noindent\textbf{Notation:} Assume $A$ is a square matrix. Let $\sigma(A)$ be the spectrum of $A$ and $\mathrm{Re}(\sigma(A))$ be the real part of eigenvalues of $A$.

\section{Stability analysis of time-changed SDEs}
In this section, before investigating the stability of time-changed differential equations, a time-changed Gronwall's inequality is developed and a generalized It\^o formula related to both the natural time and the random time-change is proposed.
\begin{lem}\label{gronwall}
Suppose $U(t)$ is a $\beta$-stable subordinator and $E_{t}$ is the associated inverse stable subordinator. Let $T > 0$ and $x, ~K$: $ \Omega\times [0, T] \to R_{+}$ be $\mathcal{F}_{t}$-measurable functions which are integrable with respect to $E_{t}$. Assume $u_{0}\geq 0$ is a constant. Then, the inequality 
\begin{eqnarray}\label{preinequalityofgronwallinequlity}
	x(t) \leq u_{0} + \int_{0}^{t}K(s)x(s)\mathrm{d}E_{s},  ~~~0\leq t\leq T
\end{eqnarray} 
implies almost surely
\begin{eqnarray*}
	x(t) \leq u_{0}\exp\bigg(\int_{0}^{t}K(s)\mathrm{d}E_{s}\bigg),~~~0\leq t\leq T.
\end{eqnarray*}
\end{lem}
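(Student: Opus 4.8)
The plan is to adapt the classical iteration proof of Gronwall's inequality to the random-clock integral $\mathrm{d}E_s$. First I would iterate the given inequality \eqref{preinequalityofgronwallinequlity} by substituting it into itself $n$ times. Writing $I(t):=\int_0^t K(s)x(s)\,\mathrm{d}E_s$, the first substitution gives $x(t)\le u_0 + u_0\int_0^t K(s)\,\mathrm{d}E_s + \int_0^t K(s)\Big(\int_0^s K(r)x(r)\,\mathrm{d}E_r\Big)\mathrm{d}E_s$, and after $n$ steps one expects
\begin{eqnarray*}
x(t) \le u_0\sum_{k=0}^{n-1}\frac{1}{k!}\bigg(\int_0^t K(s)\,\mathrm{d}E_s\bigg)^{k} + R_n(t),
\end{eqnarray*}
where $R_n(t)$ is the $n$-fold iterated integral remainder. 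The main technical point is to justify the bound
\begin{eqnarray*}
\int_0^t K(s_1)\int_0^{s_1} K(s_2)\cdots\int_0^{s_{k-1}}K(s_k)\,\mathrm{d}E_{s_k}\cdots\mathrm{d}E_{s_1} \le \frac{1}{k!}\bigg(\int_0^t K(s)\,\mathrm{d}E_s\bigg)^{k},
\end{eqnarray*}
which is the analogue of the ordered-simplex estimate. Since $E_t$ is continuous and nondecreasing (as $U$ is strictly increasing), the pathwise Lebesgue–Stieltjes measure $\mathrm{d}E_s$ induced on $[0,T]$ is a nonnegative, nonatomic measure, so for each fixed $\omega$ this is just the standard fact that the $k$-fold iterated integral of $K$ against a nonatomic measure equals $1/k!$ times the $k$-th power of the single integral (symmetrization over the $k!$ orderings of the simplex, the diagonal having measure zero by nonatomicity).

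Next I would control the remainder term: using the same ordered-simplex bound together with the integrability of $x$ and $K$ against $\mathrm{d}E_s$ on $[0,T]$, one gets, for almost every $\omega$,
\begin{eqnarray*}
0 \le R_n(t) \le \frac{1}{n!}\bigg(\int_0^T K(s)\,\mathrm{d}E_s\bigg)^{n}\sup_{0\le s\le T} x(s) \cdot \text{(const)},
\end{eqnarray*}
or more carefully $R_n(t)\le \frac{C(\omega)}{n!}\big(\int_0^T K(s)\,\mathrm{d}E_s\big)^{n}$ where $C(\omega)=\int_0^T K(s) x(s)\,\mathrm{d}E_s<\infty$ a.s.; since $\int_0^T K\,\mathrm{d}E_s<\infty$ almost surely, $R_n(t)\to 0$ as $n\to\infty$ for a.e. $\omega$. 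Letting $n\to\infty$ in the iterated inequality and recognizing the series as the exponential then yields
\begin{eqnarray*}
x(t) \le u_0\exp\bigg(\int_0^t K(s)\,\mathrm{d}E_s\bigg), \qquad 0\le t\le T,
\end{eqnarray*}
almost surely, which is the claim.

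The step I expect to be the main obstacle is the rigorous justification of the ordered-simplex estimate in the stochastic-integral setting, i.e. making precise that $\int_0^t K(s)x(s)\,\mathrm{d}E_s$ may be treated pathwise as an ordinary Lebesgue–Stieltjes integral against the measure $\mathrm{d}E_s(\omega)$. This is legitimate because $E_t$ has continuous nondecreasing paths of finite variation (Lemma~\ref{finitevaraitionprocessissemimartingale} guarantees $E_t$ is a semimartingale, and its finite-variation, continuous nature lets the integral be defined $\omega$-by-$\omega$ as a Stieltjes integral), so there is no martingale part and no It\^o correction; the measurability and integrability hypotheses on $x,K$ ensure the iterated integrals are well defined and finite. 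Once this pathwise reduction is in place, everything else is the classical deterministic Gronwall argument applied for each fixed $\omega$, and the "almost surely" in the conclusion comes from the a.s. finiteness of $\int_0^T K(s)\,\mathrm{d}E_s$ and the a.s. validity of the hypothesis \eqref{preinequalityofgronwallinequlity}.
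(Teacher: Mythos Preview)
Your argument is correct, but it takes a genuinely different route from the paper's proof. The paper does \emph{not} iterate the inequality against $\mathrm{d}E_s$ directly; instead it defines $y(t)=u_0+\int_0^t K(s)x(s)\,\mathrm{d}E_s$, notes $x\le y$ and hence $y(t)\le u_0+\int_0^t K(s)y(s)\,\mathrm{d}E_s$, and then applies the 2nd Change-of-Variable Formula (Lemma~\ref{2ndchangeofvariable}) to rewrite this as an ordinary $\mathrm{d}s$-integral inequality for the composed process $y(U(\tau-))$ on $[0,E_T]$. At that point the \emph{classical} Gronwall inequality is invoked pathwise, and the 1st Change-of-Variable Formula (Lemma~\ref{1stchangeofvariable}) together with $t\le U(E_t-)$ converts the resulting bound back to an $\mathrm{d}E_s$-integral. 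So the paper reduces to standard Gronwall by a time substitution, whereas you rerun the Gronwall machinery directly against the Stieltjes measure $\mathrm{d}E_s(\omega)$. Your approach is more elementary and self-contained---it uses only that $E_t$ has continuous nondecreasing paths, never touching the subordinator $U$ or the change-of-variable lemmas---and would apply verbatim to any continuous finite-variation integrator. The paper's approach, by contrast, highlights the duality between the $E_t$-clock and the $U$-clock that is used repeatedly elsewhere in the paper.

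One small correction in your remainder estimate: after pulling out the innermost integral $\int_0^{s_{n-1}}K(s_n)x(s_n)\,\mathrm{d}E_{s_n}\le C(\omega):=\int_0^T K x\,\mathrm{d}E$, what is left is an $(n-1)$-fold iterated $K$-integral, so the bound is $R_n(t)\le \dfrac{C(\omega)}{(n-1)!}\Big(\int_0^T K\,\mathrm{d}E\Big)^{n-1}$ rather than with $n!$ and exponent $n$; this of course still tends to zero and the conclusion is unaffected.
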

\begin{proof}
Let 
\begin{eqnarray}\label{defofy(t)}
	y(t) := u_{0} + \int_{0}^{t}K(s)x(s)\mathrm{d}E_{s}, ~~~0\leq t\leq T.
\end{eqnarray}
Since $K(s)$ and $x(s)$ are positive, the function $y(t)$ defined in Eq.~\eqref{defofy(t)} is nondecreasing. Moreover, from Eq.s~\eqref{preinequalityofgronwallinequlity} and~\eqref{defofy(t)},  
\begin{eqnarray*}
	x(t)\leq y(t),~~~0 \leq t \leq T,
\end{eqnarray*}
which implies
\begin{eqnarray*}
	y(t) \leq u_{0} + \int_{0}^{t}K(s)y(s)\mathrm{d}E_{s}, ~~0\leq t\leq T.
\end{eqnarray*}
Applying Lemma \ref{2ndchangeofvariable} yields 
\begin{eqnarray}\label{inequalityofapplyingchangeofvariableformula}
	\begin{aligned}
		y(t) &\leq u_{0} + \int_{0}^{E_{t}}K(U(s-))y(U(s-))\mathrm{d}s. 
 	\end{aligned}
\end{eqnarray}
Actually, for $0 \leq t \leq E_{T}$, $U(t-)$ is defined as
\begin{eqnarray*}
	U(t-)  = \inf\{s:s\in [0, T], E_s > t\}\wedge T,
\end{eqnarray*}
which means 
\begin{eqnarray}\label{relationofinverseandsubordinator}
E_{U(t-)} = t~~\textrm{and}~~t \leq U(E_{t}-).
\end{eqnarray} 
Also, let $\tau\in[0, \infty)$ and $\tau\in [0, E_{T}]$, then it holds from Eq.s~\eqref{inequalityofapplyingchangeofvariableformula} and~\eqref{relationofinverseandsubordinator} that
\begin{eqnarray*}
	\begin{aligned}
		y(U(\tau-)) &	\leq u_{0} + \int_{0}^{E_{U(\tau-)}}K(U(s-))y(U(s-))\mathrm{d}s = u_{0} + \int_{0}^{\tau}K(U(s-))y(U(s-))\mathrm{d}s. 
	\end{aligned}
\end{eqnarray*}
Apply the standard Gronwall inequality path by path to yield
\begin{eqnarray*}
   x(U(-\tau)) \leq y(U(\tau-)) \leq u_{0}\exp\bigg(\int_{0}^{\tau}K(U(s-))\mathrm{d}s\bigg).
\end{eqnarray*}
For every $t\in[0, ~T]$, let $\tau = E_{t}$. Then, applying first the relation in Eq.~\eqref{relationofinverseandsubordinator} followed by Lemma~\ref{1stchangeofvariable}
\begin{eqnarray*}
	\begin{aligned}
	  	x(t) &\leq  y(t) \leq y(U(E_{t}-))\leq u_{0}\exp\bigg(\int_{0}^{E_{t}} K(U(s-))\mathrm{d}s\bigg)= u_{0}\exp\bigg(\int_{0}^{t}K(s)\mathrm{d}E_{s}\bigg),
	\end{aligned}
\end{eqnarray*}
thereby completing the proof.
\end{proof}

\begin{lem}\label{timechangeditoformula}
Suppose $U(t)$ is a $\beta$-stable subordinator and $E_{t}$ is the associated inverse stable subordinator. Define a filtration $\{\mathcal{G}_{t}\}_{t\geq 0}$ by $\mathcal{G}_t = \mathcal{F}_{E_t}$ where $\mathcal{F}_{t}$ is the filtration defined in Eq.~\eqref{filtrationgeneratedbyBandE}. Let $X(t)$ be a process defined by the following time-changed process
\begin{eqnarray*}
  X(t) = x_{0} + \int_{0}^{t}P(s)\mathrm{d}s + \int_{0}^{t}\Phi(s)\mathrm{d}E_{s} + \int_{0}^{t}\Psi(s)\mathrm{d}B_{E_s},
\end{eqnarray*}
where $P, \Phi$ and $\Psi$ are measurable functions such that all integrals are defined. If $F: {R_{+}}\times {R_{+}}\times {R^{n}} \to {R}$ is a $C^{1, 1, 2}({R_{+}}\times {R_{+}}\times {R^{n}} ; {R})$ function, then with probability one
\begin{eqnarray*}
	\begin{aligned}
		F(t, E_{t}, X(t)) - F(0, 0, x_{0}) &= \int_{0}^{t}F_{t_{1}}(t, E_{s}, X(s))\mathrm{d}s + \int_{0}^{t}F_{t_{2}}(s, E_{s}, X(s))\mathrm{d}E_{s}\\ 
		&+ \int_{0}^{t}F_{x}(s, E_{s}, X(s))P(s)\mathrm{d}s + \int_{0}^{t}F_{x}(s, E_{s}, X(s))\Phi(s)\mathrm{d}E_{s}\\
		& +\int_{0}^{t}F_{x}(s, E_{s}, X(s))\Psi(s)\mathrm{d}B_{E_{s}} + \frac{1}{2}\int_{0}^{t}\Psi^{T}(s)F_{xx}(s, E_s, X(s))\Psi(s)\mathrm{d}E_s,
	\end{aligned}
\end{eqnarray*} 
where $F_{t_{1}}$, $F_{t_{2}}$ and $F_{x}$ are first derivatives, respectively, and $F_{xx}$ denotes the second derivative.
\end{lem}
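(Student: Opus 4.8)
The plan is to reduce this generalized It\^o formula to the classical one by using the duality and change-of-variable machinery already established. First I would introduce the non-time-changed process obtained by composing $X$ with the subordinator: set $Y(t) := X(U(t-))$ together with the deterministic clocks, so that the integrals against $\mathrm{d}E_s$ and $\mathrm{d}B_{E_s}$ are converted, via Lemma~\ref{2ndchangeofvariable}, into integrals against $\mathrm{d}s$ and $\mathrm{d}B_s$ respectively, while the integral $\int_0^t P(s)\,\mathrm{d}s$ needs to be handled separately since it is already with respect to natural time. The key structural observation is that $F$ depends on the pair $(t, E_t)$, i.e.\ on \emph{both} the natural time and the time-change; after the substitution $t \mapsto U(\tau-)$ the quantity $E_{U(\tau-)} = \tau$ by Eq.~\eqref{relationofinverseandsubordinator}, so along the subordinated time scale $F(U(\tau-), \tau, Y(\tau))$ becomes a function to which the standard It\^o formula (with a finite-variation ``time'' part coming from $U(\tau-)$ and a Brownian part) applies path by path.

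Second, the main steps in order: (i) express $X(t)$ through Lemma~\ref{2ndchangeofvariable} as $X(t) = x_0 + \int_0^t P(s)\,\mathrm{d}s + \int_0^{E_t}\Phi(U(s-))\,\mathrm{d}s + \int_0^{E_t}\Psi(U(s-))\,\mathrm{d}B_s$; (ii) apply the classical It\^o formula to $G(\tau) := F(U(\tau-), \tau, Y(\tau))$ on the subordinated scale, where $Y$ is the subordinated process, treating $U(\tau-)$ and $\tau$ as (finite-variation) arguments and collecting the quadratic-variation term only from the Brownian integral, which contributes $\tfrac12\int \Psi^T(U(s-)) F_{xx} \Psi(U(s-))\,\mathrm{d}s$; (iii) evaluate the resulting identity at $\tau = E_t$, use $U(E_t-) \geq t$ together with continuity of $E$ (so that effectively $F(U(E_t-), E_t, X(U(E_t-))) = F(t, E_t, X(t))$ almost surely, since $E$ is constant on the interval $[t, U(E_t-)]$ and $X$ is correspondingly constant on the parts driven by $E_t$ and $B_{E_t}$, while the $\int P\,\mathrm{d}s$ term and the $F_{t_1}$ term must be matched carefully); (iv) transform each integral over $[0, E_t]$ back to an integral over $[0,t]$ using Lemma~\ref{1stchangeofvariable}, turning $\mathrm{d}s \mapsto \mathrm{d}E_s$ and $\mathrm{d}B_s \mapsto \mathrm{d}B_{E_s}$, which yields exactly the six terms in the claimed formula.

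The hard part will be step (iii): the interplay between the natural-time pieces ($\int_0^t P(s)\,\mathrm{d}s$ and the $F_{t_1}$ term) and the time-changed pieces. On the subordinated scale these natural-time integrals are \emph{not} simply transformed by Lemma~\ref{2ndchangeofvariable}, and one must argue that on each flat stretch of $E$ (equivalently, each jump interval of $U$) the time-changed part of $X$ and the $E_t$-argument of $F$ stay frozen while only the $t$-argument and the $\int P\,\mathrm{d}s$ term evolve — so that differentiating $F(t, E_t, X(t))$ across such a stretch produces only the $F_{t_1}$ and $F_x P$ contributions with no spurious quadratic terms. Making this rigorous requires care with the left limits $U(\tau-)$, the relation $E_{U(\tau-)} = \tau$, and the fact that $\langle B_{E_\cdot}\rangle_t = E_t$ from Lemma~\ref{timechangedBrown} so that the cross-variation between the $\mathrm{d}s$-integral and the martingale part vanishes. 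Once the frozen-stretch bookkeeping is pinned down, the remaining computations are routine applications of the classical It\^o formula and the two change-of-variable formulas.
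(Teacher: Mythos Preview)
Your route is quite different from the paper's, and substantially more complicated than necessary. The paper does not touch duality or the change-of-variable lemmas at all. It simply observes that $t$, $E_t$, and $B_{E_t}$ are all \emph{continuous} semimartingales (the first two of finite variation, the last a continuous martingale with $\langle B_{E_\cdot}\rangle_t = E_t$ by Lemma~\ref{timechangedBrown}), hence $X(t)$ is a continuous semimartingale. One then stacks the arguments into the vector $Y(t) = (t, E_t, X(t))^T$ and applies the ordinary multi-dimensional It\^o formula for continuous semimartingales to $G(y) = F(t_1, t_2, x)$. The covariation rules $\mathrm{d}t\cdot\mathrm{d}t = \mathrm{d}E_t\cdot\mathrm{d}E_t = \mathrm{d}t\cdot\mathrm{d}E_t = \mathrm{d}t\cdot\mathrm{d}B_{E_t} = \mathrm{d}E_t\cdot\mathrm{d}B_{E_t} = 0$ and $\mathrm{d}B_{E_t}\cdot\mathrm{d}B_{E_t} = \mathrm{d}E_t$ kill every second-order term involving $t_1$ or $t_2$, so $C^{1,1,2}$ regularity suffices and the claimed formula drops out in one line.

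Your detour through the subordinated scale has a genuine gap, not just extra bookkeeping. In step (ii) you want to apply the classical It\^o formula to $G(\tau) = F(U(\tau-),\tau,Y(\tau))$, but the $\beta$-stable subordinator is a \emph{pure-jump} process: its continuous martingale and continuous finite-variation parts are both zero, so the It\^o formula on that scale produces a jump sum $\sum_{s\leq\tau}\big[G(s)-G(s-)\big]$ in place of any $\int F_{t_1}\,\mathrm{d}U$ integral, and you never account for this. Worse, step (iii) needs $F(U(E_t-),E_t,X(U(E_t-))) = F(t,E_t,X(t))$, yet on the flat stretch $[t,\,U(E_t-)]$ of $E$ the process $X$ is \emph{not} constant because the $\int_0^\cdot P(s)\,\mathrm{d}s$ piece keeps moving; so both the first and third arguments are wrong, and the discrepancy is exactly the $F_{t_1}$ and $F_x P$ contributions you still owe. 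Resolving this would force you to undo the jump sums of $U$ against the natural-time integrals on each flat stretch, which amounts to rebuilding the direct continuous-semimartingale computation by hand. The paper's one-step approach sidesteps all of this.
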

\begin{proof}
Let $Y(t) := \begin{bmatrix}
	       t\\
	       E_t\\
	       X(t)\\
\end{bmatrix}$. Then, the stochastic process $Y(t)$ is defined as 
\begin{eqnarray*}
    Y_t = \begin{bmatrix}
		t\\
		E_t\\
		x_{0} + \int_{0}^{t}P(s)\mathrm{d}s + \int_{0}^{t}\Phi(s)\mathrm{d}E_{s} + \int_{0}^{t}\Psi(s)\mathrm{d}B_{E_s}
	\end{bmatrix}.
\end{eqnarray*}
Let $y = \begin{bmatrix}
t_{1}\\
t_{2}\\ 
x\end{bmatrix}$ and $G(y) = F(t_{1}, t_{2},  x)$ which is twice differentiable in $x$ and first differentiable in $t_{1}$ and $t_{2}$. Based on the computation rules
\begin{eqnarray}\label{computationrules}
	\mathrm{d}t\cdot\mathrm{d}t = \mathrm{d}E_{t}\cdot\mathrm{d}E_{t} =   \mathrm{d}t\cdot\mathrm{d}E_{t} = \mathrm{d}t\cdot\mathrm{d}B_{E_{t}} = \mathrm{d}E_{t}\cdot\mathrm{d}B_{E_{t}} = 0,~~~ \mathrm{d}B_{E_{t}}\cdot\mathrm{d}B_{E_{t}} = \mathrm{d}E_{t},
\end{eqnarray} 
apply the standard multi-dimensional It$\hat{o}$ formula to $G(y)$ to obtain
\begin{eqnarray*}
	\begin{aligned}
		\mathrm{d}G(Y(t)) &= G_{y}(Y(t))\mathrm{d}Y(t) + \frac{1}{2}\mathrm{d}Y(t)^{T}G_{yy}(Y(t))\mathrm{d}Y(t)\\
		&= \begin{bmatrix}F_{t_{1}}(t, E_t, X(t))~~ F_{t_{2}}(t, E_{t}, X(t)) ~~F_{x}(t, E_t, X(t))\end{bmatrix}\begin{bmatrix}
			\mathrm{}dt\\
			\mathrm{d}E_t\\
			P(t)\mathrm{d}t + \Phi(t)\mathrm{d}E_{t} + \Psi(t)\mathrm{d}B_{E_t}
		\end{bmatrix}\\
		&~~~+ \frac{1}{2}\Psi^{T}(t)F_{xx}(t, E_t, X(t))\Psi(t)\mathrm{d}E_t\\
		&= F_{t_{1}}(t, E_{t}, X(t))\mathrm{d}t + F_{t_2}(t, E_t, X(t))\mathrm{d}E_{t} + F_{x}(t, E_{t}, X(t))P(t)\mathrm{d}t + F_{x}(t, E_t, X(t))\Phi(t)\mathrm{d}E_t\\	
		& + F_{x}(t, E_t, X(t))\Psi(t)\mathrm{d}B_{E_t} + \frac{1}{2}\Psi^{T}(t)F_{xx}(t, E_t, X(t))\Psi(t)\mathrm{d}E_t.           
	\end{aligned}
\end{eqnarray*}
Although the second derivative of function $F(t_{1}, t_{2}, x)$ with respect to $t_{1}$ and $t_{2}$ may not exist, according to computation rules Eq.~\eqref{computationrules}, the above application of the standard multi-dimensional It\^o formula for continuous semimartingale process still works. Then,
\begin{eqnarray*}
	\begin{aligned}
		F(t, E_{t}, X(t)) &- F(0, 0, x_{0}) = \int_{0}^{t}\bigg\{F_{t_{1}}(s, E_{s}, X(s)) + F_{x}(s, E_{s}, X(s))P(s)\bigg\}\mathrm{d}s\\
		&+  \int_{0}^{t}\bigg\{F_{t_{2}}(s, E_{s}, X(s)) + F_{x}(s, E_s, X(s))\Phi(s) + \frac{1}{2}\Psi^{T}(s)F_{xx}(s, E_s, X(s))\Psi(s)\bigg\}\mathrm{d}E_{s}\\
		&+ \int_{0}^{t}F_{x}(s, E_s, X(s))\Psi(s)\mathrm{d}B_{E_s},
	\end{aligned}
\end{eqnarray*}
which is the desired result.
\end{proof}

After establishing the time-changed Gronwall's inequality and the generalized time-changed It\^o formula, the first type of time-changed differential system we considered is 
\begin{eqnarray}\label{timechangednonlinearsystem}
\left\{ \begin{array}{rl}
&\mathrm{d}X(t) = AX(t)\mathrm{d}{E_{t}} + f(E_{t}, X(t))\mathrm{d}{E_{t}}\\
& X(0) = x_{0},
\end{array} \right.
\end{eqnarray}
where $A$ is a deterministic matrix. The corresponding non-time-changed system is
\begin{eqnarray}\label{classicnonlinearsystem}
\left\{ \begin{array}{rl}
&\mathrm{d}Y(t) = AY(t) \mathrm{d}t + f(t, Y(t))\mathrm{d}{t}\\
& Y(0) = x_{0},
\end{array} \right.
\end{eqnarray}
which plays an important role in applied science and engineering. The time-changed system, Eq.~\eqref{timechangednonlinearsystem}, occurs when the system evolves only during the operation time  $E_{t}$.
\begin{thm}\label{thm1}
Let $A$ be an $n\times n$ real constant matrix with $\mathrm{Re}(\sigma(A)) < 0$. Suppose $f: R^{+}\times R^{n}\to R^{n}$ is a nonlinear function which satisfies
\begin{eqnarray}\label{boundoffunctionf}
	\|f(E_{t}, X(t))\| \leq \|g(E_{t})\|\|X(t)\|
\end{eqnarray}
with the function $g: R^{+}\to R^{n}$ satisfying
\begin{eqnarray}\label{integrabilityofg}
	\int_{0}^{\infty}\|g(s)\|\mathrm{d}s < \infty.
\end{eqnarray}
Then the trivial solution of the time-changed nonlinear system, Eq.~\eqref{timechangednonlinearsystem}, is exponentially sample-path stable and $p$th moment asymptotically stable.
\end{thm}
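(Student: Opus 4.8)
The plan is to combine a classical Lyapunov-function argument with the two tools just established, namely the time-changed It\^o formula (Lemma~\ref{timechangeditoformula}) and the time-changed Gronwall inequality (Lemma~\ref{gronwall}). Since $\mathrm{Re}(\sigma(A)) < 0$, the Lyapunov equation $A^{T}Q + QA = -I$ admits a unique symmetric positive-definite solution $Q$, and I would work with the weighted functional $Z(t) := e^{2\gamma E_t}\,X(t)^{T}QX(t)$ for a small constant $\gamma > 0$ to be fixed below. The exponential weight $e^{2\gamma E_t}$ plays exactly the role that $e^{\alpha t}$ plays for the non-time-changed system~\eqref{classicnonlinearsystem}: it converts the spectral decay of $A$ into a decay rate measured along the operational clock $E_t$.

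First I would apply Lemma~\ref{timechangeditoformula} with $P\equiv 0$, $\Psi\equiv 0$, $\Phi(s)=AX(s)+f(E_s,X(s))$, and $F(t_1,t_2,x)=e^{2\gamma t_2}x^{T}Qx$ (a smooth function, so the $C^{1,1,2}$ requirement holds). Because there is no $\mathrm{d}B_{E_s}$-integrand, no martingale term appears and
\[
Z(t) = x_0^{T}Qx_0 + \int_0^{t} e^{2\gamma E_s}\Big(2\gamma\,X(s)^{T}QX(s) + 2X(s)^{T}Q\big(AX(s)+f(E_s,X(s))\big)\Big)\,\mathrm{d}E_s .
\]
Using $2x^{T}QAx = x^{T}(QA+A^{T}Q)x = -\|x\|^{2}$, the growth bound~\eqref{boundoffunctionf}, the Cauchy--Schwarz inequality, and $\lambda_{\min}(Q)\|x\|^{2}\le x^{T}Qx\le\|Q\|\,\|x\|^{2}$, the $\mathrm{d}E_s$-integrand is bounded above by $e^{2\gamma E_s}\big(2\gamma\|Q\|-1+2\|Q\|\,\|g(E_s)\|\big)\|X(s)\|^{2}$. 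Fixing $\gamma$ small enough that $2\gamma\|Q\|\le\tfrac12$ makes the constant part of the bracket negative, and what remains is dominated by $\tfrac{2\|Q\|}{\lambda_{\min}(Q)}\,\|g(E_s)\|\,Z(s)$.

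Consequently $Z(t)\le Z(0)+\int_0^{t}\tfrac{2\|Q\|}{\lambda_{\min}(Q)}\|g(E_s)\|\,Z(s)\,\mathrm{d}E_s$, and Lemma~\ref{gronwall} gives, almost surely, $Z(t)\le Z(0)\exp\!\big(\tfrac{2\|Q\|}{\lambda_{\min}(Q)}\int_0^{t}\|g(E_s)\|\,\mathrm{d}E_s\big)$. By the 1st Change-of-Variable Formula (Lemma~\ref{1stchangeofvariable}, with null Brownian part), $\int_0^{t}\|g(E_s)\|\,\mathrm{d}E_s = \int_0^{E_t}\|g(s)\|\,\mathrm{d}s \le \int_0^{\infty}\|g(s)\|\,\mathrm{d}s < \infty$ by~\eqref{integrabilityofg}; hence $Z(t)\le C\,x_0^{T}Qx_0$ with $C:=\exp\!\big(\tfrac{2\|Q\|}{\lambda_{\min}(Q)}\int_0^{\infty}\|g(s)\|\,\mathrm{d}s\big)$, uniformly in $t$ and in $\omega$. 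Unwinding the definition of $Z$ yields $\|X(t)\|^{2}\le\tfrac{C\|Q\|}{\lambda_{\min}(Q)}\|x_0\|^{2}e^{-2\gamma E_t}$, i.e. $\|X(t)\|\le K\|x_0\|e^{-\gamma E_t}$ with $K:=\big(C\|Q\|/\lambda_{\min}(Q)\big)^{1/2}$; taking $\nu(t):=E_t$, which increases to $\infty$ as $t\to\infty$ for almost every path because $U$ is almost surely finite-valued, this is exactly exponential sample-path stability. Alternatively, the same estimate follows from the Duality Lemma~\ref{duality} by writing $X(t)=Y(E_t)$, solving~\eqref{classicnonlinearsystem} by variation of parameters with $\|e^{At}\|\le Me^{-\alpha t}$, and applying the ordinary Gronwall inequality together with~\eqref{integrabilityofg}.

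For the $p$th moment assertion, raise the pathwise bound to the power $p$ and take expectations: $\mathbb{E}\|X(t)\|^{p}\le K^{p}\|x_0\|^{p}\,\mathbb{E}\big[e^{-p\gamma E_t}\big]$. Setting $\nu(t):=\mathbb{E}\big[e^{-p\gamma E_t}\big]\in(0,1]$, dominated convergence (using $e^{-p\gamma E_t}\le1$ and $E_t\to\infty$ almost surely) gives $\nu(t)\to0$ as $t\to\infty$, which is the required form of $p$th moment asymptotic stability with constant $K^{p}$. The step most in need of care is verifying that $Z$ satisfies the measurability and $E_t$-integrability hypotheses of Lemma~\ref{gronwall} and that no local-martingale contribution has been dropped in the It\^o step; both are in fact routine precisely because~\eqref{timechangednonlinearsystem} contains no $\mathrm{d}B_{E_t}$ term, so the real content of the argument is the drift estimate and the calibration of $\gamma$ above.
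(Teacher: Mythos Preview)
Your argument is correct, but the route differs from the paper's. The paper proceeds by a variation-of-constants representation: applying Lemma~\ref{timechangeditoformula} with $F(t_1,t_2,x)=e^{-t_2 A}x$ one obtains
\[
X(t)=e^{AE_t}x_0+\int_0^t e^{A(E_t-E_s)}f(E_s,X(s))\,\mathrm{d}E_s,
\]
then uses the matrix-exponential bound $\|e^{At}\|\le K e^{-\lambda t}$ (valid because $\mathrm{Re}(\sigma(A))<0$) to obtain directly
\[
e^{\lambda E_t}\|X(t)\|\le K\|x_0\|+K\int_0^t\|g(E_s)\|\,e^{\lambda E_s}\|X(s)\|\,\mathrm{d}E_s,
\]
and applies Lemma~\ref{gronwall} and Lemma~\ref{1stchangeofvariable} just as you do. For the moment part the paper invokes the explicit Laplace transform $\mathbb{E}[e^{-\lambda E_t}]=E_\beta(-\lambda t^\beta)$ and the known decay of the Mittag--Leffler function, rather than dominated convergence.

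By contrast you work with the quadratic Lyapunov function $V(x)=x^{T}Qx$ coming from $A^{T}Q+QA=-I$, weighted by $e^{2\gamma E_t}$, and feed the resulting scalar inequality into Lemma~\ref{gronwall}. This is the ``alternative'' you allude to at the end, and in fact the duality/variation-of-parameters line you sketch there is essentially the paper's proof. What your Lyapunov route buys is a template that transfers to systems where a mild-solution formula is unavailable but a quadratic Lyapunov function still exists; what the paper's route buys is a sharper and more transparent decay rate (any $\lambda$ below the spectral gap, versus your $\gamma\le 1/(4\|Q\|)$) and the explicit identification of the moment decay profile $\nu(t)=E_\beta(-p\lambda t^\beta)$, which underlies the remark following the theorem about why $p$th moment stability is only asymptotic and not exponential.
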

\begin{proof}
Let $F(t_{1}, t_{2}, x) = \exp(t_{2})x$. Apply the time-changed It\^o formula, Lemma~\ref{timechangeditoformula}, to the time-change system, Eq.~\eqref{timechangednonlinearsystem}, to yield 
\begin{eqnarray}\label{expressionofX(t)afteritoformula}
	X(t) = \exp(AE_{t})x_{0} + \int_{0}^{t}\exp(A(E_{t}-E_{s}))f(E_{s}, X(s))\mathrm{d}E_{s}.
\end{eqnarray}
Since $\mathrm{Re}(\sigma(A)) < 0$, there is a constant $K > 0$ and $\lambda > 0$ such that, for all $t > 0$, 
\begin{eqnarray}\label{boundofmatrixA}
	\|\exp(At)\| \leq K\exp(-\lambda t).
\end{eqnarray}
Taking the norm on both sides of Eq.~\eqref{expressionofX(t)afteritoformula} and applying conditions, Eqs.~\eqref{boundoffunctionf} and~\eqref{boundofmatrixA}, yields
\begin{eqnarray*}
	\begin{aligned}
		\|X(t)\| &\leq K\exp(-\lambda E_{t})\| x_{0}\|  + \int_{0}^{t}K\exp(-\lambda(E_{t}-E_{s}))\| f(E_{s}, X(s))\|\mathrm{d}E_{s}\\
		&\leq K\exp(-\lambda E_{t})\|x_{0}\|  + \int_{0}^{t}K\exp(-\lambda(E_{t}-E_{s}))\|g(E_{s})\|\|X(s)\|\mathrm{d}E_{s}.
	\end{aligned}
\end{eqnarray*}
This means
\begin{eqnarray*}
	\exp(\lambda E_{t})\| X(t)\| \leq K\|x_{0}\| + K\int_{0}^{t}\|g(E_{s})\|\exp(\lambda E_{s})\|X(s)\|\mathrm{d}E_{s}.
\end{eqnarray*}
Apply the time-changed Gronwall's inequality, Lemma \ref{gronwall}, to yield almost surely 
\begin{eqnarray*}
	\exp(\lambda E_{t})\|X(t)\| \leq K\|x_{0}\|\exp\bigg(K\int_{0}^{t}\| g(E_{s})\|\mathrm{d}E_{s}\bigg),
\end{eqnarray*}
which implies almost surely
\begin{eqnarray}\label{boundofnormofX(t)}
     \|X(t)\| \leq \exp(-\lambda E_{t}) K\|x_{0}\|\exp\bigg(K\int_{0}^{t}\| g(E_{s})\|\mathrm{d}E_{s}\bigg).
\end{eqnarray}
Combine Lemma~\ref{1stchangeofvariable} and condition Eq.~\eqref{integrabilityofg}
  to yield 
\begin{eqnarray}\label{boundofintegralofg(t)}
	\int_{0}^{t}\lVert g(E_{s})\rVert \mathrm{d}E_{s} = \int_{0}^{E_{t}}\lVert g(s)\rVert \mathrm{d}s \leq \int_{0}^{\infty}\lVert g(s)\rVert \mathrm{d}s < \infty.
\end{eqnarray}
Also since $E_{t} \to \infty$ as $t\to\infty$ almost surely, it indicates from Eq.s~\eqref{boundofnormofX(t)} and~\eqref{boundofintegralofg(t)} that $\|X(t)\|\to 0$ exponentially in the sense of almost sure convergence. Moreover, from Eq.~\eqref{boundofnormofX(t)},
\begin{eqnarray*}
	\mathbb{E}\|X(t)\|^{p}\leq \mathbb{E}\bigg\{\exp\big(-\lambda pE_{t}\big)K^{p}\| x_{0}\|^{p}\exp\bigg(Kp\int_{0}^{t}\| f(E_{s})\|\mathrm{d}E_{s}\bigg)\bigg\}.
\end{eqnarray*}
Again from Lemma~\ref{1stchangeofvariable} and the fact that $E_{t} \to \infty$ as $t\to \infty$ almost surely,
\begin{eqnarray}\label{pthmoment}
\mathbb{E}\|X(t)\|^{p} \leq \mathbb{E}\bigg\{\exp\big(-\lambda pE_{t}\big)\bigg\}K^{p}\| x_{0}\|^{p}\exp\bigg(Kp\int_{0}^{\infty}\lVert f(s)\rVert\mathrm{d}s\bigg).
\end{eqnarray}
On the other hand, the inverse $\beta$-stable subordinator $E_{t}$ takes Laplace transform
\begin{eqnarray}\label{laplacetransformofE_{t}}
   \mathbb{E}(\exp(-\lambda E_{t})) = E_{\beta}(-\lambda t^{\beta}) ,
\end{eqnarray}
where $E_{\beta}(t)$ is the Mittag-Leffler function defined by $E_{\beta}(t) = \sum_{k=0}^{\infty}\frac{t^{k}}{\Gamma(k\beta + 1)}$ with Gamma function $\Gamma(t)$ for $t\geq 0$. Also $E_{\beta}(-\lambda t^{\beta})\to 0$ as $t\to\infty$, see \cite{mainardi2013some}. 
Then, from Eq.s~\eqref{pthmoment} and~\eqref{laplacetransformofE_{t}},
\begin{eqnarray*}
	\mathbb{E}\|X(t)\|^{p} \leq E_{\beta}(-\lambda pt^{\beta})K^{p}\| x_{0}\|^{p}\exp\bigg(Kp\int_{0}^{\infty}\lVert f(s)\rVert\mathrm{d}s\bigg)\to 0.
\end{eqnarray*}
Therefore, the trivial solution $X(t)$ of the time-changed system Eq.~\eqref{timechangednonlinearsystem} is exponentially sample-path stable and $p$th moment asymptotically stable.
\end{proof}

\begin{corl}\label{corl1}
Let $A$ be an $n\times n$ real constant matrix with $\mathrm{Re}(\sigma(A)) < 0$. Suppose $f: R^{+}\times R^{n}\to R^{n}$ is a nonlinear function. If the trivial solution of the non-time-changed system Eq.~\eqref{classicnonlinearsystem} is exponentially stable,
then the trivial solution of the time-changed system Eq.~\eqref{timechangednonlinearsystem} is $p$th moment asymptotically stable.
\end{corl}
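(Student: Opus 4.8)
The plan is to reduce the statement to the (deterministic) exponential stability of Eq.~\eqref{classicnonlinearsystem} by means of the duality in Lemma~\ref{duality}, and then to pass to $p$th moments using the Laplace transform of $E_{t}$ recorded in Eq.~\eqref{laplacetransformofE_{t}}. First I would note that Eq.~\eqref{timechangednonlinearsystem} and Eq.~\eqref{classicnonlinearsystem} are exactly the special cases of Eq.~\eqref{timechangedSDE} and Eq.~\eqref{classicSDE} obtained by taking drift coefficient $\mu(t,x)=Ax+f(t,x)$ and diffusion coefficient $\delta\equiv 0$. Hence part (1) of Lemma~\ref{duality} applies: if $Y(t)$ is the solution of Eq.~\eqref{classicnonlinearsystem} with $Y(0)=x_{0}$, then $X(t):=Y(E_{t})$ solves Eq.~\eqref{timechangednonlinearsystem} with $X(0)=x_{0}$, and by uniqueness of solutions this is \emph{the} solution $X(t)$ of Eq.~\eqref{timechangednonlinearsystem}. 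Observe that Eq.~\eqref{classicnonlinearsystem} is an ordinary differential equation, so its trajectory $Y(\cdot)$ and the stability estimate used below are deterministic.

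Next, since the trivial solution of Eq.~\eqref{classicnonlinearsystem} is exponentially stable, there exist positive constants $K$ and $\lambda$ such that $\|Y(t)\|\leq K\|x_{0}\|\exp(-\lambda t)$ for all $t\geq 0$ and all $x_{0}\in\mathbb{R}^{n}$. Composing with the time-change and raising to the $p$th power gives, pathwise, $\|X(t)\|^{p}=\|Y(E_{t})\|^{p}\leq K^{p}\|x_{0}\|^{p}\exp(-\lambda p E_{t})$. Taking expectations and invoking Eq.~\eqref{laplacetransformofE_{t}} yields
\begin{eqnarray*}
\mathbb{E}\|X(t)\|^{p}\leq K^{p}\|x_{0}\|^{p}\,\mathbb{E}\big[\exp(-\lambda p E_{t})\big]=K^{p}\|x_{0}\|^{p}\,E_{\beta}(-\lambda p t^{\beta}).
\end{eqnarray*}
Since $E_{\beta}(-\lambda p t^{\beta})\to 0$ as $t\to\infty$, setting $\nu(t):=E_{\beta}(-\lambda p t^{\beta})$ and using the constant $K^{p}$ shows, directly from the definition of $p$th moment asymptotic stability, that the trivial solution of Eq.~\eqref{timechangednonlinearsystem} is $p$th moment asymptotically stable.

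The computation itself is routine; the only point requiring care is the appeal to Lemma~\ref{duality}, namely verifying that the nonlinear systems Eq.~\eqref{timechangednonlinearsystem} and Eq.~\eqref{classicnonlinearsystem} fall under the hypotheses of the duality lemma and that their solutions are unique, so that $X(t)$ may legitimately be identified with $Y(E_{t})$. The assumption $\mathrm{Re}(\sigma(A))<0$ plays no independent role here beyond what is already encoded in the assumed exponential stability of Eq.~\eqref{classicnonlinearsystem}; it is kept only for consistency with Theorem~\ref{thm1}.
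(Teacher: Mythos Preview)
Your proposal is correct and follows essentially the same route as the paper: invoke the duality Lemma~\ref{duality} to write $X(t)=Y(E_{t})$, use the assumed exponential bound on $Y$, and then pass to $p$th moments via the Laplace transform identity $\mathbb{E}[\exp(-\lambda p E_{t})]=E_{\beta}(-\lambda p t^{\beta})$. The paper inserts an explicit conditioning step on $\{E_{t}=\tau\}$ with the density $f_{E_{t}}$, whereas you bound $\|Y(E_{t})\|^{p}$ pathwise and then take expectations; since $Y$ is deterministic (as you note), the two computations are equivalent and yours is arguably the cleaner presentation.
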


\begin{proof}
Let $Y(t)$ be the solution of the non-time-changed system Eq.~\eqref{classicnonlinearsystem}. By the duality Lemma \ref{duality}, the process $X(t) := Y(E_{t})$ is the solution of time-changed system Eq.~\eqref{timechangednonlinearsystem}. Also since the solution, $Y(t)$, of the non-time-changed system Eq.~\eqref{classicnonlinearsystem}  is exponentially stable, there exists positive constants, $K$ and $\lambda$, such that
\begin{eqnarray*}
	\|Y(t)\| \leq K\exp(-\lambda t).
\end{eqnarray*}
Applying conditional expectation yields
\begin{eqnarray*}
	\begin{aligned}
		\mathbb{E}\|X(t)\|^{p} &= \mathbb{E}\|Y(E_{t})\|^{p} = \int_{0}^{\infty}\mathbb{E}\bigg(\|Y(E_{t})\|^{p}\bigg|E_{t} = \tau\bigg)f_{E_{t}}(\tau)\mathrm{d}\tau = \int_{0}^{\infty}\|Y(\tau)\|^{p}f_{E_{t}}(\tau)\mathrm{d}\tau\\
		&\leq \int_{0}^{\infty}K^{p}\|x_{0}\|^{p}\exp(-p\lambda \tau)f_{E_{t}}(\tau)\mathrm{d}\tau = K^{p}\|x_{0}\|^{p}\mathbb{E}(-p\lambda E_{t}) =  K^{p}\|x_{0}\|^{p}E_{\beta}(-p\lambda t^{\beta}).
	\end{aligned}
\end{eqnarray*} 
Therefore, the trivial solution of time-changed system Eq.~\eqref{timechangednonlinearsystem} is $p$th moment asymptotically stable.
\end{proof}

\begin{rem}Theorem \ref{thm1} indicates that although the sample path of the trivial solution of the time-changed nonlinear system Eq.~\eqref{timechangednonlinearsystem} is exponentially stable, the $p$th $(p\geq 1)$ moment of the trivial solution is asymptotically stable. This makes sense because the inverse $\beta$-stable subordinator, $E_{t}$, has a distribution with a heavy tail. The long range dependence (i.e. memory) will slow the decay rate of the $p$-th moment even though every sample path decays exponentially.  
\end{rem}

\begin{rem}
Actually, under conditions Eq.s~\eqref{boundoffunctionf} and~\eqref{integrabilityofg}, the trivial solution of the non-time-changed system Eq.~\eqref{classicnonlinearsystem} is exponentially stable. In this sense, Corollary \ref{corl1} is directly derived from Theorem \ref{thm1}. However, based on the duality Lemma \ref{duality}, Corollary \ref{corl1} provides a deep connection on stability between the non-time-changed system Eq.~\eqref{classicnonlinearsystem} and the time-changed system Eq.~\eqref{timechangednonlinearsystem}.
\end{rem}

The next time-changed system can be considered as a perturbed version of a linear system. However, the external force term is affected by the operation time $E_{t}$. So the perturbed time-changed system is
\begin{eqnarray}\label{mixedtimechangedsystem}
	\left\{ \begin{array}{rl}
		&\mathrm{d}X_{t} = AX_{t}\mathrm{d}t + f(E_{t}, X_{t})\mathrm{d}{E_{t}}\\
		& X(0) = x_{0}.
	\end{array} \right.
\end{eqnarray}

\begin{thm}\label{thmonmixedtimechangedsystem}
	Let $A$ be an $n\times n$ real constant matrix with $\mathrm{Re}(\sigma(A)) < 0$. Suppose $f: R^{+}\times R^{n}\to R^{n}$ is a nonlinear function which satisfies conditions Eq.s~\eqref{boundoffunctionf} and~\eqref{integrabilityofg}. Then the trivial solution of the time-changed system Eq.~\eqref{mixedtimechangedsystem} is sample-path and $p$th moment exponentially stable.
\end{thm}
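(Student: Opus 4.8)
The plan is to reduce Eq.~\eqref{mixedtimechangedsystem} to a Duhamel (variation-of-constants) representation by applying the generalized It\^o formula, Lemma~\ref{timechangeditoformula}, with $F(t_{1}, t_{2}, x) = \exp(-At_{1})x$ (read componentwise, so that the scalar hypothesis of Lemma~\ref{timechangeditoformula} is met). For Eq.~\eqref{mixedtimechangedsystem} the driving data are $P(s) = AX(s)$, $\Phi(s) = f(E_{s}, X(s))$ and $\Psi(s) \equiv 0$. Since $F_{t_{2}} = 0$, $F_{xx} = 0$, $F_{x} = \exp(-At_{1})$ and $F_{t_{1}} = -A\exp(-At_{1})x$, and since $A$ commutes with $\exp(-As)$, the two $\mathrm{d}s$-terms $\int_{0}^{t}(-A)\exp(-As)X(s)\mathrm{d}s$ and $\int_{0}^{t}\exp(-As)AX(s)\mathrm{d}s$ cancel, leaving
\begin{eqnarray*}
	\exp(-At)X(t) = x_{0} + \int_{0}^{t}\exp(-As)f(E_{s}, X(s))\mathrm{d}E_{s},
\end{eqnarray*}
that is, $X(t) = \exp(At)x_{0} + \int_{0}^{t}\exp(A(t-s))f(E_{s}, X(s))\mathrm{d}E_{s}$.

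Next I would estimate. Since $\mathrm{Re}(\sigma(A)) < 0$ there are $K, \lambda > 0$ with $\|\exp(At)\| \leq K\exp(-\lambda t)$ as in Eq.~\eqref{boundofmatrixA}; combining this with Eq.~\eqref{boundoffunctionf}, taking norms, and multiplying through by $\exp(\lambda t)$ gives
\begin{eqnarray*}
	\exp(\lambda t)\|X(t)\| \leq K\|x_{0}\| + K\int_{0}^{t}\|g(E_{s})\|\exp(\lambda s)\|X(s)\|\mathrm{d}E_{s}.
\end{eqnarray*}
Setting $u(t) := \exp(\lambda t)\|X(t)\|$ and applying the time-changed Gronwall inequality, Lemma~\ref{gronwall}, yields almost surely
\begin{eqnarray*}
	\exp(\lambda t)\|X(t)\| \leq K\|x_{0}\|\exp\bigg(K\int_{0}^{t}\|g(E_{s})\|\mathrm{d}E_{s}\bigg).
\end{eqnarray*}
By Lemma~\ref{1stchangeofvariable} together with Eq.~\eqref{integrabilityofg}, $\int_{0}^{t}\|g(E_{s})\|\mathrm{d}E_{s} = \int_{0}^{E_{t}}\|g(s)\|\mathrm{d}s \leq \int_{0}^{\infty}\|g(s)\|\mathrm{d}s =: M < \infty$, so $\|X(t)\| \leq K\exp(KM)\|x_{0}\|\exp(-\lambda t)$ almost surely. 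This is precisely sample-path exponential stability with constants $K\exp(KM)$ and $\lambda$; and because this bound is deterministic and holds a.s., raising it to the $p$th power and taking expectation gives $\mathbb{E}\|X(t)\|^{p} \leq (K\exp(KM))^{p}\|x_{0}\|^{p}\exp(-p\lambda t)$, i.e. $p$th moment exponential stability.

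The main obstacle is the first step: one must check that Lemma~\ref{timechangeditoformula} legitimately applies to the matrix-valued $F(t_{1}, t_{2}, x) = \exp(-At_{1})x$ and that the linear $\mathrm{d}t$-terms cancel exactly — this relies on $A$ commuting with its exponential and is what converts Eq.~\eqref{mixedtimechangedsystem} into the tractable integral form. Everything afterward parallels the proof of Theorem~\ref{thm1}; the single conceptual difference — and the reason the $p$th moment conclusion is now exponential rather than merely asymptotic — is that here the decaying factor $\exp(-\lambda t)$ lives on the natural time scale, while the cumulative effect of the perturbation is still controlled by the finite constant $M$ via the first change-of-variable formula.
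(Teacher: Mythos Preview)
Your proposal is correct and follows essentially the same approach as the paper: apply the generalized It\^o formula with $F(t_{1}, x) = \exp(-At_{1})x$ to obtain the Duhamel representation, estimate via $\|\exp(At)\| \leq K\exp(-\lambda t)$ and Eq.~\eqref{boundoffunctionf}, multiply by $\exp(\lambda t)$, invoke the time-changed Gronwall Lemma~\ref{gronwall}, and finish with Lemma~\ref{1stchangeofvariable} plus Eq.~\eqref{integrabilityofg}. You are simply more explicit than the paper about the cancellation of the $\mathrm{d}t$-terms and about the need to multiply through by $\exp(\lambda t)$ before Gronwall applies; the paper states both the Duhamel formula and the post-Gronwall bound directly.
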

		
\begin{proof}
Let $F(t_{1}, x) = \exp(-t_{1}A)x$. Apply the time-changed It\^o Lemma~\ref{timechangeditoformula} to the time-changed system Eq.~\eqref{mixedtimechangedsystem} to yield
\begin{eqnarray*}
    X(t) = \exp(At)x_{0} + \int_{0}^{t}\exp(A(t - s))f(E_{s},  X(s))\mathrm{d}E_{s}.
\end{eqnarray*}
Applying the condition Eq.~\eqref{boundoffunctionf} and the fact that $\mathrm{Re}(\sigma(A)) < 0$ yields
\begin{eqnarray*}
	\|X(t)\| \leq K\exp(-\lambda t)\lVert x_{0}\rVert + K\int_{0}^{t}\exp(-\lambda (t-s))\lVert g(E_{s})\rVert \lVert X_{s}\rVert\mathrm{d}E_{s}.
\end{eqnarray*}
From Gronwall's inequality of Lemma~\ref{gronwall} and the first change of variable Lemma~\eqref{1stchangeofvariable},
\begin{eqnarray}\label{boundofnormofx(t)ofmixedtimechangedsystem}
	\begin{aligned}
		\|X(t)\| &\leq \exp(-\lambda t)K\| x_{0}\|\exp\bigg(K\int_{0}^{t}\| g(E_{s})\|\mathrm{d}E_{s}\bigg)\\
		& = \exp(-\lambda t)K\| x_{0}\|\exp\bigg(K\int_{0}^{E_{t}}\lVert g(s)\rVert \mathrm{d}s\bigg).
	\end{aligned}
\end{eqnarray}
Similarly, applying the finiteness condition, Eq.~\eqref{integrabilityofg}, to Eq.~\eqref{boundofnormofx(t)ofmixedtimechangedsystem} yields $\|X_{t}\|\to 0$ exponentially for every sample path as $t\to \infty$. This means the trivial solution of the time-changed system Eq.~\eqref{mixedtimechangedsystem} is sample-path exponentially stable. Moreover, from Eq.~\eqref{boundofnormofx(t)ofmixedtimechangedsystem}, 
\begin{eqnarray*}
	\begin{aligned}
		\mathbb{E}\|X(t)\|^{p} &\leq \exp(-p\lambda t)K^{p}\| x_{0}\|^{p}\mathbb{E}\bigg\{\exp\bigg(Kp\int_{0}^{E_t}\|g(s)\| \mathrm{d}s\bigg)\bigg\}\\
		&\leq \exp(-p\lambda t)K^{p}\| x_{0}\|^{p}\exp\bigg(Kp\int_{0}^{\infty}\lVert g(s)\rVert \mathrm{d}s\bigg).
	\end{aligned}
\end{eqnarray*}
Therefore, $\mathbb{E}\|X(t)\|^{p} \to 0$ exponentially which means the trivial solution of the time-changed system~\eqref{mixedtimechangedsystem} is also $p$th moment exponentially stable.		
\end{proof}

\begin{rem}
Theorem~\ref{thmonmixedtimechangedsystem} reveals that although the linear system is disturbed by the environment which incorporates long-term memory dependent behavior, the trivial solution of the disturbed system Eq.~\eqref{mixedtimechangedsystem} is both sample-path and $p$th moment exponentially stable. This stability of the system Eq.~\eqref{mixedtimechangedsystem} is different from the stability of the system Eq.~\eqref{timechangednonlinearsystem}. This difference results from whether or not the dominant part of the linear system is affected by the operation time $E_{t}$.
\end{rem}
Finally, consider the time-changed system which can be considered as a time-changed linear system perturbed by long-term memory dependent noise with the noise being the time-changed Brownian motion $B_{E_{t}}$. 
\begin{eqnarray}\label{stochastictimechangednonlinearsystem}
\left\{\begin{array}{rl}
&\mathrm{d}X(t) = AX(t)\mathrm{d}E_{t} +  f(E_{t}, X(t))\mathrm{d}B_{E_{t}}\\
& X(0) = x_{0},
\end{array} \right.
\end{eqnarray}
where $B_{t}$ is a standard Brownian motion. 
\begin{thm}
	Let $A$ be an $n\times n$ real constant matrix with $\mathrm{Re}(\sigma(A)) < 0$. Supose $f: R^{+}\times R^{n}\to R^{n}$ is a nonlinear function which satisfies condition Eq.~\eqref{boundoffunctionf} and a function $g: R^{+}\to R$ which satisfies
	\begin{eqnarray}\label{integrabilityofgg}
	\int_{0}^{\infty}\|g(s)\|^{2}\mathrm{d}s < \infty.
	\end{eqnarray} Then the trivial solution of the time-changed system Eq.~\eqref{mixedtimechangedsystem} is square-mean asymptotically stable.  
\end{thm}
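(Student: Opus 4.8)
The plan is to adapt the variation-of-constants scheme already used for Theorems~\ref{thm1} and~\ref{thmonmixedtimechangedsystem}; the new ingredient is a conditioning argument needed to control the second moment of the stochastic integral. Here ``square-mean asymptotically stable'' is the $p=2$ case of $p$th moment asymptotic stability, so the goal is a bound $\mathbb{E}\|X(t)\|^2\le K\|x_0\|^2\nu(t)$ with $\nu(t)\to 0$, and the system in question is~\eqref{stochastictimechangednonlinearsystem} (the displayed reference to~\eqref{mixedtimechangedsystem} being a slip). First I would apply the time-changed It\^o formula, Lemma~\ref{timechangeditoformula}, with $F(t_1,t_2,x)=\exp(-t_2A)x$ to the system~\eqref{stochastictimechangednonlinearsystem}: since $F$ is linear in $x$ the second-order term vanishes, $F$ does not depend on $t_1$, and the two $\mathrm{d}E_s$-terms produced by $F_{t_2}$ and by $F_x\cdot(AX)$ cancel because $A$ commutes with $\exp(-AE_s)$. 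This gives, with probability one,
$$X(t)=\exp(AE_t)x_0+\int_0^t\exp\!\big(A(E_t-E_s)\big)f(E_s,X(s))\,\mathrm{d}B_{E_s}.$$
Since $\mathrm{Re}(\sigma(A))<0$ there are $K>0$, $\lambda>0$ with $\|\exp(At)\|\le Ke^{-\lambda t}$ for $t\ge 0$, and $E_t-E_s\ge 0$ for $s\le t$ because $E_\cdot$ is nondecreasing.

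The second step is the second-moment estimate, and this is where the proof departs from the earlier ones. The integrand $\exp(A(E_t-E_s))f(E_s,X(s))$ is \emph{not} $\mathcal{G}_s$-adapted, because it contains $E_t$, so the It\^o isometry is not directly available; moreover crudely bounding $\|\exp(AE_t)\|\le K$ discards the decay and one cannot recover it afterwards, since $e^{-2\lambda E_t}$ and the quadratic variation of the stochastic integral are correlated. To get around this I would condition on $\mathcal{H}:=\sigma(E_s:s\ge 0)$ (using that $B$ is independent of the time change, which is implicit in the setting of Lemma~\ref{timechangedBrown}): given $\mathcal{H}$, the path $E_\cdot$ is deterministic, $s\mapsto B_{E_s}$ is a continuous martingale with quadratic variation $E_s$ by Lemma~\ref{timechangedBrown}, the integrand becomes adapted, and the conditional It\^o isometry together with $\|a+b\|^2\le 2\|a\|^2+2\|b\|^2$ and condition~\eqref{boundoffunctionf} yields
$$\mathbb{E}\big[\|X(t)\|^2\,\big|\,\mathcal{H}\big]\le 2K^2e^{-2\lambda E_t}\|x_0\|^2+2K^2\,\mathbb{E}\!\left[\int_0^t e^{-2\lambda(E_t-E_s)}\|g(E_s)\|^2\|X(s)\|^2\,\mathrm{d}E_s\,\Big|\,\mathcal{H}\right].$$

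The third step closes the estimate by Gronwall. Put $\psi(t):=e^{2\lambda E_t}\,\mathbb{E}[\|X(t)\|^2\mid\mathcal{H}]$; multiplying the last display by $e^{2\lambda E_t}$ and using (conditional) Fubini — legitimate because $E_\cdot$ and $g(E_\cdot)$ are $\mathcal{H}$-measurable, so given $\mathcal{H}$ the measure $\mathrm{d}E_s$ is deterministic — gives
$$\psi(t)\le 2K^2\|x_0\|^2+2K^2\int_0^t\|g(E_s)\|^2\psi(s)\,\mathrm{d}E_s,\qquad 0\le t\le T.$$
Now the time-changed Gronwall inequality, Lemma~\ref{gronwall}, followed by the first change-of-variable formula, Lemma~\ref{1stchangeofvariable}, and the integrability assumption~\eqref{integrabilityofgg}, give, with $C:=\int_0^\infty\|g(s)\|^2\mathrm{d}s<\infty$,
$$\psi(t)\le 2K^2\|x_0\|^2\exp\!\Big(2K^2\!\int_0^t\!\|g(E_s)\|^2\mathrm{d}E_s\Big)=2K^2\|x_0\|^2\exp\!\Big(2K^2\!\int_0^{E_t}\!\|g(s)\|^2\mathrm{d}s\Big)\le 2K^2e^{2K^2C}\|x_0\|^2.$$
Undoing the weight, taking expectations over $\mathcal{H}$, and using the Laplace transform~\eqref{laplacetransformofE_{t}} with $E_\beta(-2\lambda t^\beta)\to 0$,
$$\mathbb{E}\|X(t)\|^2\le 2K^2e^{2K^2C}\|x_0\|^2\,\mathbb{E}\big[e^{-2\lambda E_t}\big]=2K^2e^{2K^2C}\|x_0\|^2\,E_\beta(-2\lambda t^\beta)\longrightarrow 0,$$
so $\nu(t):=E_\beta(-2\lambda t^\beta)$ witnesses square-mean asymptotic stability.

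The main obstacle is exactly this second-moment estimate of the stochastic-integral term: the prefactor $\exp(A(E_t-E_s))$ breaks adaptedness and blocks both a pathwise bound (which sufficed for the noise-free systems of Theorems~\ref{thm1} and~\ref{thmonmixedtimechangedsystem}) and a naive use of the It\^o isometry. Conditioning on the $\sigma$-algebra generated by the time change simultaneously makes $E_t$ deterministic, restores adaptedness of the integrand, and legitimizes the Fubini interchange that sets up the time-changed Gronwall inequality; everything afterwards is the same Gronwall-plus-change-of-variables routine as before. A minor point to verify in passing is that, conditionally on $\mathcal{H}$, the solution $X(\cdot)$ is still adapted to the conditional Brownian filtration and the integrand is conditionally square-integrable, which follows from $X$ being constructed as a stochastic integral against $B_{E_\cdot}$. (An alternative route, applying Lemma~\ref{timechangeditoformula} to $e^{2cE_t}\,X(t)^{\!\top}PX(t)$ for a Lyapunov matrix $P$ with $A^{\!\top}P+PA\prec 0$, runs into the same need to condition on $\mathcal{H}$ before the Gronwall step, so the variation-of-constants version above seems preferable.)
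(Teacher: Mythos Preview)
Your argument is correct and arrives at exactly the same final estimate, $\mathbb{E}\|X(t)\|^2\le 2K^2e^{2K^2C}\|x_0\|^2\,E_\beta(-2\lambda t^\beta)$, but the route differs from the paper's. The paper never writes the variation-of-constants formula for the time-changed system; instead it invokes the duality Lemma~\ref{duality} and works entirely with the \emph{non}-time-changed SDE~\eqref{stochasticnonlinearsystem}: there the integrand $\exp(A(t-s))f(s,Y(s))$ is genuinely adapted, so the ordinary It\^o isometry and the standard Gronwall inequality give $\mathbb{E}\|Y(t)\|^2\le 2K^2e^{-2\lambda t}\|x_0\|^2\exp\big(2K^2\!\int_0^t\|g(s)\|^2\mathrm{d}s\big)$ directly. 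Only then does the paper pass to $X(t)=Y(E_t)$ and condition on the \emph{single} random variable $E_t$ (using independence of $Y$ and $E$) to obtain the Mittag--Leffler bound.

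The tradeoff is this: by going through duality the paper sidesteps precisely the obstacle you identify --- the non-adaptedness of $\exp(A(E_t-E_s))$ --- at the cost of an extra lemma, and it only needs to condition on $E_t$ rather than on the whole path $\sigma(E_s:s\ge 0)$. Your approach is more self-contained (no duality, and it exercises the time-changed Gronwall Lemma~\ref{gronwall} that the paper develops but, in this theorem, does not actually use), but it leans on the conditional It\^o isometry and conditional Fubini under $\mathcal{H}$, whose justification --- that $X$ remains adapted to the conditional Brownian filtration and the integrand is conditionally square-integrable --- you flag but do not fully carry out. Both lines are sound; the paper's is shorter and requires less measure-theoretic care.
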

\begin{proof}
Suppose the following non-time-changed stochastic differential system  corresponds to the time-changed system Eq.~\eqref{stochastictimechangednonlinearsystem}
\begin{eqnarray}\label{stochasticnonlinearsystem}
\left\{\begin{array}{rl}
&\mathrm{d}Y(t) = A(t)\mathrm{d}t +  f(t, Y(t))\mathrm{d}B_{t}\\
& Y(0) = x_{0}.
\end{array} \right.
\end{eqnarray}
Let $F(t, y) = \exp(At)y$. Applying the standard It\^o formula to Eq.~\eqref{stochasticnonlinearsystem} yields 
\begin{eqnarray}\label{solutionoftimechangedSDE}
	\begin{aligned}
		Y(t) &= \exp(At)x_{0} + \int_{0}^{t}\exp(A(t - s))f(s, Y(s))\mathrm{d}B_{s}.
	\end{aligned}
\end{eqnarray}
It is known from \cite{Marcin2010} and \cite{kuo05} that
\begin{eqnarray*}
	\int_{0}^{t}\exp(A(t - s))f(s, Y(s))\mathrm{d}B_{s}
\end{eqnarray*}
is a square integrable martingale. So apply the Cauchy inequality and  It\^o identity to yield
\begin{eqnarray*}
	\begin{aligned}
		\mathbb{E}\|Y(t))\|^{2} &\leq 2\mathbb{E}\| \exp(At)x_{0}\|^{2} +  2\mathbb{E}\bigg\| \int_{0}^{t}\exp(A(t - s)f(s, Y(s))\mathrm{d}B_{s}\bigg\|^{2}\\
		&\leq 2\|\exp(At)\|^{2}\|x_{0}\|^{2} + 2\mathbb{E}\bigg(\int_{0}^{t}\|\exp(A(t - s))\|^{2}\|f(s, Y(s))\|^{2}\mathrm{d}s\bigg).
	\end{aligned}
\end{eqnarray*}
Since $Re(\sigma(A)) < 0$ and the nonlinear function $f$ satisfies conditions, Eq.~\eqref{boundoffunctionf} and~\eqref{integrabilityofgg}, 
\begin{eqnarray*}
	\mathbb{E}\|Y(t)\|^{2}	&\leq 2K^{2}\exp(-2\lambda t) \|x_{0}\|^{2} + 2K^{2}\int_{0}^{t}\exp(-2\lambda(t - s))\|g(s)\|^{2}\mathbb{E}\|Y(s)\|^{2}\mathrm{d}s.
\end{eqnarray*}
Using the standard Gronwall's inequality yields
\begin{eqnarray}\label{boundofmeansquareofnontimechangesolution}
	\begin{aligned}
		\mathbb{E}\|Y(t)\|^{2}	&\leq 2K^{2}\exp(-2\lambda t) \|x_{0}\|^{2}\exp\bigg(2K^{2}\int_{0}^{t}\|g(s)\|^{2}\mathrm{d}s\bigg),
	\end{aligned}
\end{eqnarray}
which results in $\mathbb{E}\|Y\|^{2}\to 0$ exponentially 
from condition Eq.~\eqref{integrabilityofgg}. Moreover, let $X(t) := Y(E_{t})$ and then $X(t)$ is the solution of the stochastic time-changed system Eq.~\eqref{stochastictimechangednonlinearsystem} from duality Theorem~\ref{duality}. Then, combining conditional expectation with Eq.~\eqref{boundofmeansquareofnontimechangesolution} yields
\begin{eqnarray*}
	\begin{aligned}
		\mathbb{E}\|X(t)\|^{2} &= \mathbb{E}\|Y(E_{t})\|^{2} = \int_{0}^{\infty}\mathbb{E}\bigg(\|Y(E_{t})\|^{2}\bigg|E_{t} = \tau\bigg)f_{E_{t}}(\tau)\mathrm{d}\tau\\
		&\leq K_{0}\int_{0}^{\infty}\exp(-2\lambda\tau)f_{E_{t}}(\tau)\mathrm{d}\tau = K_{0}\mathrm{E}(\exp(-2\lambda E_{t})) = K_{0}E_{\beta}(-2\lambda t^{\beta}),
	\end{aligned}
\end{eqnarray*}
where $K_{0} = 2K^{2}\exp\bigg(2K^{2}\int_{0}^{\infty}\|g(s)\|^{2}\mathrm{d}s\bigg)$. Therefore the trivial solution of the time-changed system, Eq.~\eqref{stochastictimechangednonlinearsystem}, is square-mean asymptotically stable.
\end{proof}

\section{Acknowledgments}
The author wishes to thank Dr. Marjorie Hahn for her advice, encouragement and patience with my research, and the author's peer Lise Chlebak as well as Dr. Patricia Garmirian for their discussions. 

\bibliography{mybibfile}

\begin{thebibliography}{10}
\expandafter\ifx\csname url\endcsname\relax
  \def\url#1{\texttt{#1}}\fi
\expandafter\ifx\csname urlprefix\endcsname\relax\def\urlprefix{URL }\fi
\expandafter\ifx\csname href\endcsname\relax
  \def\href#1#2{#2} \def\path#1{#1}\fi

\bibitem{Rugh1996}
W.~J. Rugh, Linear system theory, Vol.~2, prentice hall Upper Saddle River, NJ,
  1996.

\bibitem{Feng1992}
X.~Feng, K.~Loparo, Y.~Ji, H.~Chizeck, Stochastic stability properties of jump
  linear systems, Automatic Control, IEEE Transactions on 37~(1) (1992) 38--53.

\bibitem{erneux2009applied}
T.~Erneux, Applied delay differential equations, Vol.~3, Springer Science \&
  Business Media, 2009.

\bibitem{matignon1996stability}
D.~Matignon, Stability results for fractional differential equations with
  applications to control processing, in: Computational engineering in systems
  applications, Vol.~2, Lille France, 1996, pp. 963--968.

\bibitem{Kei2011}
K.~Kobayashi, Stochastic calculus for a time-changed semimartingale and the
  associated stochastic differential equations, Journal of Theoretical
  Probability 24~(3) (2011) 789--820.

\bibitem{Janicki1994}
A.~Janicki, A.~Weron, Simulation and Chaotic Behavior of Alpha-stable
  Stochastic Processes, Hugo Steinhaus Center, Wroclaw University of
  Technology, 1994.

\bibitem{Mircea2002}
M.~Grigoriu, Stochastic Calculus: Application in Science and Engineering,
  Springer Science \& Business Media, 2002.

\bibitem{Marcin2010}
M.~Magdziarz, Path properties of subdiffusion - a martingale approach,
  Stochastic Models 26~(2) (2010) 256--271.

\bibitem{Jean1979}
J.~Jacod, Calcul Stochastique et Probl\`{e}mes de Martingales, Lecture Notes in
  Mathematics, 714. Springer, Berlin, 1979.

\bibitem{mainardi2013some}
F.~Mainardi, On some properties of the {Mittag-Leffler} function
  {$E_{\alpha}(-t^{\alpha})$}, completely monotone for $t> 0$ with $0 < \alpha
  < 1$, arXiv preprint arXiv:1305.0161.

\bibitem{kuo05}
H.-H. Kuo, {Introduction to stochastic integration}, Springer, 2005.

\end{thebibliography}

\end{document}